\documentclass[11pt]{amsart}
\usepackage[a4paper,margin=2.5cm]{geometry}

\usepackage[T1]{fontenc}
\usepackage{lmodern}
\usepackage{microtype}
\usepackage{graphicx}
\usepackage[normalem]{ulem}
\usepackage{cancel}

\usepackage{amsmath,amssymb,amsthm,mathtools}
\mathtoolsset{showonlyrefs}
\usepackage{braket}
\usepackage{aliascnt}

\usepackage[hypertexnames=false]{hyperref}

\newcommand{\R}{\mathbb{R}}

\newcommand{\cK}{\mathcal{K}}

\DeclarePairedDelimiter{\abs}{\lvert}{\rvert}
\DeclarePairedDelimiter{\paren}{(}{)}

\newcommand{\ch}{\mathfrak{ch}}
\newcommand{\pa}{\mathfrak{pa}}
\newcommand{\de}{\mathfrak{de}}
\newcommand{\debar}{\overline{\de}}

\newcommand{\1}[1]{\mathbf{1}_{#1}}

\newcommand{\GB}{\mathrm{GB}}
\newcommand{\TB}{\mathrm{TB}}

\newcommand{\Beta}{\mathrm{Beta}}

\newcommand{\GBI}{\GB_{\mathrm{I}}}
\newcommand{\BetaI}{\Beta_{\mathrm{I}}}
\newcommand{\GBII}{\GB_{\mathrm{II}}}
\newcommand{\BetaII}{\Beta_{\mathrm{II}}}

\numberwithin{equation}{section}

\theoremstyle{plain}
\newtheorem{theorem}{Theorem}[section]

\newaliascnt{lemma}{theorem}
\newtheorem{lemma}[lemma]{Lemma}
\aliascntresetthe{lemma}

\newaliascnt{proposition}{theorem}
\newtheorem{proposition}[proposition]{Proposition}
\aliascntresetthe{proposition}

\newaliascnt{corollary}{theorem}
\newtheorem{corollary}[corollary]{Corollary}
\aliascntresetthe{corollary}

\theoremstyle{definition}
\newaliascnt{definition}{theorem}
\newtheorem{definition}[definition]{Definition}
\aliascntresetthe{definition}

\newaliascnt{example}{theorem}

\aliascntresetthe{example}

\theoremstyle{remark}
\newtheorem{remark}{Remark}

\title[Independence properties for tree beta models]{Independence properties for tree beta models}
\author{Yoshihiro Gyotoku}
\address{Graduate School of Mathematical Sciences, The University of Tokyo,
3-8-1 Komaba, Meguro-ku, Tokyo 153-8914, Japan}
\email{gyotoku@ms.u-tokyo.ac.jp}

\author{Makiko Sasada}
\address{Graduate School of Mathematical Sciences, The University of Tokyo,
3-8-1 Komaba, Meguro-ku, Tokyo 153-8914, Japan}
\email{sasada@ms.u-tokyo.ac.jp}

\thanks{
This work was supported by JSPS KAKENHI Grant Nos.~24KJ0933 and 24K21515, by the FoPM WINGS Program at the University of Tokyo, and  by the National Science Centre, Poland, under Project
No.~2023/51/B/ST1/01535.
}

\begin{document}

\author{Jacek Weso\l owski}
\address{Faculty of Mathematics and Information Science, Warsaw University of Technology, Koszykowa 75, 00-662 Warsaw, Poland}
\email{jacek.wesolowski@pw.edu.pl}
\begin{abstract} 
For an undirected tree $G = (V, E)$ with edge weights $q = (q_{e})_{e \in E}$ we consider tree polynomials
	\begin{equation}
		\Delta_{G}(k) \coloneqq \sum_{U \subseteq V} (-1)^{|U|}q^{E_U}k^{U}\quad \text{and}\quad \delta_G(k)\coloneqq \sum_{U \subseteq V}\,q^{E_U}k^{U}
	\end{equation}
	where $q^{E_U} \coloneqq \prod_{e \in E_U} q_{e}$ and $k^{U} \coloneqq \prod_{v \in U} k_{v}$. 
	We introduce  multivariate first/second kind beta-type probability distributions,  $\TB^{G}_{\mathrm I}/\TB^{G}_{\mathrm{II}}$, whose densities with respect to Lebesgue measure are proportional to
	\begin{equation}
		\Delta_{G}(k)^{\beta-1} \prod_{v \in V} \abs{k_{v}}^{a_{v} - 1} \1{\cK}(k) \quad\text{and}\quad \delta_{G}(k)^{- \beta} \prod_{v \in V} \abs{k_{v}}^{a_{v} - 1} \1{\cK}(k)
	\end{equation}
    on suitable supports $\cK\subset \mathbb R^V$ and with suitable range of parameters $a=(a_v) \in\mathbb R^V$, $\beta\in\mathbb R$ and $q=(q_e)\in\mathbb R^E$. 
	For each vertex $r \in V$, the rooted directed tree $G^{(r)}$ determines a mapping $\Psi^{(r)}/\psi^{(r)}$ whose components are ratios of tree polynomials $\Delta_G/\delta_G$ evaluated on descendant induced subgraphs of $G$.
	It is shown that if a random vector $K \sim \TB^{G}_{\mathrm I}/\TB^{G}_{\mathrm{II}}$, then $\Psi^{(r)}(K)/\psi^{(r)}(K)$ is a vector of independent generalised $\Beta_{\mathrm I}/\Beta_{\mathrm {II}}$ random variables for every root~$r$. Conversely, if a non-Dirac random vector $K$ is such that $\Psi^{(\ell)}(K)/\psi^{(\ell)}(K)$ has independent non-Dirac components for every leaf $\ell$ of $G$, then $K$ follows a $\TB^{G}_{\mathrm I}/\TB^{G}_{\mathrm{II}}$ law.

	Closed-form normalizing constants for both tree-beta distributions, expressed as products of Euler beta/gamma functions and Gauss hypergeometric functions, are also derived. Finally, it is proved that the second-kind tree beta law converges under a natural scaling to the tree-Kummer law. Recently, \cite{SasUoz2024} identified probabilistic models with the independence preserving (IP) property related to a hierarchy of quadrirational Yang-Baxter layers of maps $H_{\mathrm{I}}^+$, $H_{\mathrm{II}}^+$ and $H_{\mathrm{III}}$. When $\abs{E} = 1$ our models are closely related to the models of the $H_{I}^{+}$ layer. The tree beta models we introduce parallel the tree-GIG results of \cite{MasWes2004} (related to the models with the IP properties of the $H_{\mathrm{III}}$ layer) and the tree-Kummer results of \cite{PilWes16} (related to the models with the IP properties of the $H_{II}^{+}$ layer). When $q \equiv 0$, our models reduce to  tree versions of the Dirichlet and inverse Dirichlet models on decomposable graphs of \cite{DKWZ25}. 

\end{abstract}

\maketitle

\section{Introduction and Main Results} \label{sec:introduction}



A map $F:\mathcal{X}\times\mathcal{Y}\to\mathcal{U}\times\mathcal{V}$ is called independence preserving (IP) if there exist non-Dirac probability distributions $\mu$, $\nu$, $\tilde{\mu}$, and $\tilde{\nu}$ on the respective spaces such that if $X\sim \mu$ and $Y\sim \nu$ are independent, then the variables $(U,V)=F(X,Y)$ satisfying $U\sim\tilde{\mu}$ and $V\sim\tilde{\nu}$ are also independent. 
The identification of the entire family of probability measures $\mu,\nu,\tilde{\mu},\tilde{\nu}$ for a given map $F$ is called the independence characterization problem induced by $F$. 

Classical examples include the Kac--Bernstein characterization of the Gaussian law via the independence of the sum and difference, and the Lukacs characterization of the gamma law via the independence of the sum and ratio. 

Recently, \cite{SasUoz2024} identified a hierarchy of IP maps within the classification of quadrirational Yang--Baxter transformations discussed by \cite{PSTV2010}. Specifically, maps from the $H_{\mathrm{III}}$ layer of this classification were identified as IP maps for the generalized inverse Gaussian (GIG) distributions, and maps from the $H_{\mathrm{II}}^+$ class were identified as IP maps for the Kummer distribution. 
Furthermore, maps from the $H_{\mathrm{I}}^+$ class, which stand at the top of the hierarchy, were identified as IP maps for the generalized beta prime (also called $\mathrm{Beta}_{\mathrm{II}}$) distribution. Respective characterizations for all these cases were established, respectively, in \cite{LetWes2024}, \cite{KouWes2025}, and \cite{DKWZ25}. 

All this refers to the bivariate setting. To introduce the multivariate setting we first remark that in the bivariate case alternatively one can consider a random vector $K=(Y,V)$. Then the IP map $F$ generates two maps, say, $\chi_1$ and $\chi_2$ such that random vectors $\chi_1(K)=(X,Y)$ and $\chi_2(K)=(U,V)$ have independent components. Such approach is convenient to lift the bivariate IP models to multivariate ones. Thus, in the multivariate setting, we  search for a random vector $K$ (or strictly speaking, its distribution) valued in some set $\mathcal{K}\subset \mathbb{R}^d$, alongside a family of bijections $\Psi^{(r)}:\mathcal K\to \mathcal X$ for some domains $\mathcal K,\mathcal X\subset \mathbb R^d$, $r\in\{1,\ldots,d\}$, such that each of the $d$-variate random vectors $\Psi^{(r)}(K)$ has independent components. We will call such functions independence generating (IG) maps. Consequently,  compositions of an IG map with the inverse of another IG map, e.g. $\Psi^{(s)}\circ(\Psi^{(r)})^{-1}$ for distinct $r,s\in\{1,\ldots,d\}$, are multivariate IP maps, i.e. each such composition transforms a random vector with independent components into another random vector with independent components. In this paper we are interested in tree-generated models in which both the distribution of the random vector $K$ has a tree-structure and maps $\Psi^{(r)}$, $r=1,\dots,d$, are related to respective directed trees. Two such tree models are well-documented in the literature and are related to bivariate $H_{\mathrm{III}}$ and $H_{\mathrm{II}}^+$ families of Yang-Baxter maps mentioned above. To briefly describe them we need to introduce some graph theoretical basics. 

Let $G=(V,E)$ be a tree with a vertex set $V$ and an edge set $E$. The standing assumption in this paper is that $1 < |V| <\infty$.  For $U\subset V$, we denote by $G_U=(U,E_U)$ the induced subgraph of $G$ (i.e., $E_U$ is the set of edges $e=\{v,w\}\in E$ such that $v,w\in U$). We denote by $L_G$ the set of its leaves, that is, vertices $v\in V$ of degree 1.
We are interested in the setting where, for each $r\in V$, the IG map $\Psi^{(r)}=(\Psi^{(r)}_v,\,v\in V)$ is constructed with reference to the $r$-rooted directed tree $G^{(r)}$ sharing the common skeleton $G$. To describe the structure of $G^{(r)}$, we write $v\to w$ if $\{v,w\}\in E$ and $v$ lies on the unique path connecting $r$ and $w$. Accordingly, for each $v\in V$, we denote the set of children of $v$ in the directed tree $G^{(r)}$ by $\ch^{(r)}(v):=\{w\in V:\,v\to w\}$. 
    
Additionally, we introduce the edge weights $(q_e)_{e\in E}$, where we write $q_{v,w}:=q_e$ when $e=\{v,w\}\in E$. 

\bigskip
The first model is the Matsumoto--Yor (MY) tree model (see \cite{MasWes2004}). It can be considered as a multivariate version of certain bivariate IP models from the $H_{\mathrm{III}}$ class (see \cite{SasUoz2024}), which originate from \cite{MatYor2001}, \cite{MatYor2003}. The MY tree model has the following two basic ingredients: 

\begin{enumerate}
    \item The tree-GIG distribution defined by the density
\begin{equation}\label{tGIG}
f(k)\propto \Big[\det (Q+\mathrm{diag}(k)) \Big]^{\beta-1}\,e^{-\sum_{v\in V}\,a_vk_v}\,\mathbf{1}_{\mathcal{K}}(k)
\end{equation}
with parameters $\beta>0$ and $a=(a_v)\in(0,\infty)^V$. 
Here $Q$ is the $(q_e)_{e\in E}$-weighted incidence matrix of $G$ with $q_e\neq 0$, $e\in E$, $\mathrm{diag}(k)$ is the diagonal matrix with the vector $k$ on the diagonal, and  
\[
\mathcal{K} = \{k=(k_v)\in(0,\infty)^V\mid Q+\mathrm{diag}(k)>0\}.
\]
\item The maps $\Psi^{(r)}:\mathcal K\to(0,\infty)^V$, $r\in V$, defined by
\begin{equation}\label{PsiMY}
\Psi^{(r)}_v(k)=k_v-\sum_{w\in\ch^{(r)}(v)}\,\frac{q_{w,v}^2
}{\Psi^{(r)}_w(k)},\quad k=(k_v)_{v\in V}\in\mathcal K,\quad v\in V.
\end{equation}
(This is a recursive definition which starts with $v\in L_G\setminus\{r\}$; then $\ch^{(r)}(v)=\varnothing$). 
\end{enumerate} 
To see that the image is $(0,\infty)^V$ see, e.g., Lemma 2.3 of \cite{MasWes2004}.
It appears that $\Psi^{(r)}$, $r\in V$, are IG maps for the tree-GIG distribution: If a random vector $K$ has the tree-GIG distribution, then for any $r\in V$, the components of the random vector $\Psi^{(r)}(K)=:X^{(r)}=(X^{(r)}_v)_{v\in V}$ are independent, and $X^{(r)}_v$, $v\in V\setminus\{r\}$ have GIG (generalized inverse Gaussian) distributions, while $X^{(r)}_r$ is a gamma random variable. Conversely, if $K$ is a non-Dirac random vector with distribution supported on $\mathcal K$, then the independence of the components of $\Psi^{(r)}(K)$ just for all $r\in L_G$ implies that $K$ has a tree-GIG distribution with density \eqref{tGIG}. Interestingly, the MY tree model has a matrix variate counterpart, \cite{Bob2015}.

\bigskip
The second model is the Hamza--Vallois (HV) tree model (see \cite{PilWes16}). It can be considered as a multivariate version of the special bivariate IP models from the $H_{\mathrm{II}}^+$ class (see \cite{SasUoz2024}), which originate from \cite{HamVal2016}. The HV tree model has the following two basic ingredients:
\begin{enumerate}
    \item The tree-Kummer distribution, $\mathrm{TK}^G(a,\beta,q)$, for $a\in (0,\infty)^V$, $\beta>0$ and $q\in(0,\infty)^E$, is defined by the density
\begin{equation}\label{tKum}
f(k)\propto \exp\Biggl(-\beta\sum_{\substack{U \subseteq V \\G_U \, \text{is a tree}}} q^{E_U}k^{U} \Biggr)\;\Biggl(\prod_{v\in V}\,k_v^{a_v-1}\Biggr)\,\mathbf{1}_{(0,\infty)^V}(k),
\end{equation}
where $q^{E_U}=\prod_{e\in E_U}\,q_e$ and $k^{U}=\prod_{v\in U}\,k_v$.
\item The maps $\Psi^{(r)}:(0,\infty)^V\to(0,\infty)^V$ defined by
\begin{equation}\label{PsiHV}
\Psi^{(r)}_v(k)=k_v\prod_{w\in\ch^{(r)}(v)}\,\left(1+q_{w,v}\,\Psi^{(r)}_w(k)\right),\quad k=(k_v)_{v\in V}\in(0,\infty)^V, \quad v\in V.
\end{equation}
(This is a recursive definition which starts with $v\in L_G\setminus \{r\}$; then $\ch^{(r)}(v)=\varnothing$).
\end{enumerate}
It appears that $\Psi^{(r)}$, $r\in V$, are IG maps for the tree-Kummer distribution: If a random vector $K$ has the tree-Kummer distribution, then for any $r\in V$, the components of the random vector $\Psi^{(r)}(K)=:X^{(r)}=(X^{(r)}_v)_{v\in V}$ are independent, and $X_v^{(r)}$, $v\in V\setminus \{r\}$ have Kummer distributions, while $X^{(r)}_r$ is a gamma random variable. Conversely, if $K$ is a non-Dirac random vector valued in $(0,\infty)^V$, then the independence of the components of $\Psi^{(r)}(K)$ just for all $r\in L_G$ implies that $K$ has a tree-Kummer distribution with density \eqref{tKum}. (Actually, the tree-Kummer distribution described above is the distribution defined in Sec. 3.3. \cite{PilWes16} with slightly simplified parameters.)

\bigskip
The main goal of this paper is to identify and analyze independence properties of the missing $H_{\mathrm{I}}^+$-related tree model. 
Actually we will identify two such models---featuring the tree-generalized-$\mathrm{Beta}_{\mathrm{I}}$ and the tree-generalized-$\mathrm{Beta}_{\mathrm{II}}$ distributions and their related IG maps parametrized by the edge weights $(q_e)_{e\in E}$ and directed graphs $G^{(r)}$, $r\in V$. 

Notably, as we will see, in the case where $q_e=0$ for all $e\in E$, these tree beta-type models  reduce to the tree restriction of the Dirichlet-type models on decomposable graphs, which were recently introduced and studied by \cite{DKWZ25} in the context of parametric Bayesian graphical models of negative multinomial and multinomial type. 
In particular, the authors of \cite{DKWZ25} considered random vectors $K$ with two Dirichlet-type graph-related laws for a decomposable graph $G=(V,E)$,  which we briefly describe below. To do it, we need to recall some basic facts about decomposable graphs, see e.g. \cite{BlaPey1993}.

Let $G=(V,E)$ be a finite undirected simple graph.  The complementary graph of $G$ is the graph $G^*=(V,E^*)$, where $E^*=\{\{v,w\}:\,v\neq w,\,v,w\in V,\;\{v,w\}\not\in E\}$. A subset $C\subset V$ is a clique in $G$ if $G_C$ is a complete graph. By $\mathcal C_G$ we denote the set of cliques of $G$. In particular, $\varnothing\in\mathcal C_G$. Consider a directed graph $\mathcal G$ with skeleton $G=(V,E)$. Denote by $\mathfrak{pa}^{(\mathcal G)}(v)=\{w\in V:\,w\to v\}$ the set of parents of $v\in V$. We say that $\mathcal G$ is a DAG (directed acyclic graph) if it has no directed cycles. A DAG $\mathcal G$ is moral if for any $v\in V$ the induced graph $G_{\mathfrak{pa}^{(\mathcal G)}(v)}$ is complete. One of many equivalent characterizations of decomposability says that a graph $G=(V,E)$ is decomposable iff there exists a moral DAG $\mathcal G$ with skeleton $G$. In \cite{DKWZ25} the following two models were introduced and investigated, in particular in the context of application to Bayesian graphical models. 
\begin{enumerate}
    \item {\em Graph Dirichlet distribution}: Let $K$ be a random vector with distribution defined by the density
    \[
    f(k)\propto[\Delta_G(k)]^{\beta-1}\,\biggl(\prod_{v\in V}\,k_v^{a_v-1}\biggr)\,\mathbf 1_{M_G}(k),
    \]
    where  $a\in(0,\infty)^V$, $\beta>0$, 
    \begin{equation}\label{dDelta}
    \Delta_G(k)=\sum_{C\in \mathcal C_{G^*}}\,(-1)^{|C|}\,k^C
    \end{equation}
     and 
    \[
    M_G=\{k\in (0,1)^V :\,\Delta_A(k)>0\, \ \forall A\subset V\}
    \] 
    with $\Delta_A(k):=\Delta_{G_A}(k|_A)$.
     For any moral DAG $\mathcal G$ with skeleton $G$, define a map
    \[
    \Psi^{(\mathcal G)}_v(k)=\tfrac{k_v}{\prod_{w\in\mathfrak{ch}^{(\mathcal G)}(v)}\,(1-\Psi^{(\mathcal G)}_w(k))},\quad  v\in V,\quad \text{for }k\in M_G.
    \]
    Then $\Psi^{(\mathcal G)}$, where $\mathcal G$ is a moral with decomposable skeleton $G$, are IG maps in this model: random vector $\Psi^{(\mathcal G)}(K)=:X^{(\mathcal G)}=(X_v^{(\mathcal G)})_{v\in V}$ has independent $\mathrm{Beta}_{\mathrm{I}}$ components. 
    
    Conversely, if $K$ is a non-Dirac $M_G$-valued random vector, then the independence of components of $\Psi^{(\mathcal G)}(K)$ for all moral DAGs $\mathcal G$ with skeleton $G$ implies that $K$ has the graph-Dirichlet distribution with the density as given above. 
    
    Let us point out that when $G$ is a complete graph, the graph Dirichlet law reduces to the classical Dirichlet distribution. In this case, the independence properties described above correspond to the concepts known as neutrality, complete neutrality and, more generally, neutrality with respect to partitions. Neutrality related characterizations of the Dirichlet distribution have been intensively studied in the literature, see e.g. \cite{Fabius1973}, \cite{JamMos1980}, \cite{GeiHec1997}, \cite{BobWes2007}, \cite{BobWes2009} and \cite{MasWes2016}.
    
    \item {\em Graph inverted Dirichlet distribution:} Let $K$ be a random vector with distribution defined by the density
    \[
    f(k)\propto[\delta_G(k)]^{-\beta}\,\biggl(\prod_{v\in V}\,k_v^{a_v-1}\biggr)\,\mathbf 1_{(0,\infty)^V}(k),
    \]
    where $a\in (0,\infty)^V$ and $\beta>\max_{C\in\mathcal C_G}\,\biggl(\sum_{v\in C}\,a_v\biggr)$ and
    \begin{equation}\label{ddelta}
    \delta_G(k)=\sum_{C\in \mathcal C_{G^*}}\,k^C.
    \end{equation}
    For any moral DAG $\mathcal G$ with decomposable skeleton $G=(V,E)$, define a map $\psi^{(\mathcal G)}=(\psi^{(\mathcal G)}_v)_{v\in V}$ by
    \[
    \psi_v^{(\mathcal G)}(k)=\tfrac{k_v}{\prod_{w\in\mathfrak{ch}^{(\mathcal G)}(v)}\,(1+\psi_w^{(\mathcal G)}(k))}, \quad v\in V,\quad \text{for } k \in(0,\infty)^V.
    \]
    Then $\psi^{(\mathcal G)}$, where $\mathcal G$ is a moral with decomposable skeleton $G$, are IG maps in this model: the random vector $\psi^{(\mathcal G)}(K)=:X^{(\mathcal G)}=(X_v^{(\mathcal G)})_{v\in V}$ has independent $\mathrm{Beta}_{\mathrm{II}}$ components.

    Conversely, if $K$ is a non-Dirac $(0,\infty)^V$-valued random vector, then the independence of components of $\psi^{(\mathcal G)}(K)$ for all moral DAGs $\mathcal G$ with skeleton $G$ implies that $K$ has the graph inverted Dirichlet distribution with the density as given above.
\end{enumerate}

\bigskip
In the present paper, we want to extend the above two cases of graph Dirichlet and inverted Dirichlet distributions to wider families of multivariate beta-type distributions together with  respective IG maps leading to random vectors with independent generalized $\mathrm{Beta}_{\mathrm I}$ and generalized $\mathrm{Beta}_{\mathrm{II}}$ components (which would also extend the bivariate $H_{\mathrm{I}}^+$ based models, as identified in \cite{SasUoz2024} and related characterizations from \cite{KLPW25}). To achieve this goal, though, we need to restrict the family of decomposable graphs to the family of trees (then each moral DAGs $\mathcal G$ becomes a directed $r$-rooted tree $G^{(r)}$ for some $r\in V$). Recently the two graph Dirichlet-type distributions described above have been extended, keeping the assumption that the underlying graph is decomposable, in a different direction, to graph Dirichlet-multinomial and graph Dirichlet-negative-multinomial distributions in \cite[Sec. 6] {DanKol2026}  

The rest of the paper is organized as follows. In the remaining parts of Section 1, we first consider the basic bivariate case, in particular, deriving bivariate characterization, which essentially can be traced back in literature (Subsection 1.1). Then, we introduce the first and second kind tree-beta distributions and families of maps which are supposed to be IG maps for these models, and finally formulate main results of the paper: Theorem \ref{direct} and Theorem \ref{Charact} (Subsection 1.2).  In Section 2, we establish basic properties of polynomials $\Delta_G$ and $\delta_G$, as well as maps $\Psi^{(r)}$ and $\psi^{(r)}$, which are intensively used in the sequel. In Section 3, we prove the independence properties of Theorem \ref{direct} and derive the form of the normalizing constants in both tree-beta models. Section 4 is devoted to proving the characterizations of Theorem \ref{Charact}. Finally, Section 5 relates the tree-Kummer and the second kind tree-beta distributions, which allows us to find a (symmetric) explicit form of the normalizing constant of the $\mathrm{TK}^G$ law.

\subsection{Bivariate prototypes}
The simplest instance of the present construction is the two-vertex tree $(\{0, 1\}, \{\{0, 1\}\})$ with a single edge of weight~$q$.
The first-kind and second-kind  prototypes are presented in turn, as they motivate the tree-level definitions and theorems of the subsequent subsections.

We first recall the univariate distributions that appear as marginals throughout the paper and were identified in the bivariate models related to $H_{\mathrm{I}}^+$ class of IP maps in \cite{SasUoz2024} (see also \cite{KLPW25}).
Following \cite{KouVal2012}, the beta and the generalised beta distributions of the first kind, $\BetaI(\alpha,\beta)$ and $\GBI(\alpha,\beta,\gamma;q)$, $\alpha,\beta>0$, $\gamma\in\mathbb R$ and $q<1$ are defined through densities
\begin{align}
	& f_{\BetaI}(x)
	= \tfrac{\Gamma(\alpha+\beta)}{\Gamma(\alpha)\Gamma(\beta)}\,
	x^{\alpha - 1}(1 - x)^{\beta - 1} \, \1{(0, 1)}(x), \label{bI}\\
	& f_{\GBI}(x)
	=\tfrac{\Gamma(\alpha+\beta)}{\Gamma(\alpha)\Gamma(\beta)\,_2F_1(\alpha,\gamma;\alpha+\beta;q)}\,
	x^{\alpha - 1}(1 - x)^{\beta - 1}(1 - q x)^{- \gamma} \, \1{(0, 1)}(x),\label{gbI}
\end{align}
where $_2F_1$ is the Gauss hypergeometric function. The normalizing constant in \eqref{gbI} is an immediate consequence of the Euler integral representation of $_2F_1$, see e.g. formula (15.3.1) in \cite{AbrSteg1965}. We note that $\mathrm{GB}_{\mathrm I}(\alpha,\beta,\gamma;0)=\mathrm{Beta}_{\mathrm I}(\alpha,\beta)$ and additionally we define $\mathrm{GB}_{\mathrm I}(\alpha,\beta,\gamma;1)=\mathrm{Beta}_{\mathrm I}(\alpha,\beta-\gamma)$ when  $\alpha>0$ and $\beta>\gamma$.

The corresponding  beta and generalised beta distributions of the second kind, $\BetaII(\alpha,\beta)$ and $\GBII(\alpha,\beta,\gamma;q)$, the former also known under the name  beta-prime distribution, are defined through densities
\begin{align}
	& f_{\BetaII}(x)=
	\tfrac{\Gamma(\beta)}{\Gamma(\alpha)\Gamma(\beta-\alpha)}\,\tfrac{x^{\alpha - 1}}{(1 + x)^{\beta}} \, \1{(0, \infty)}(x), \label{bII} \\
	& f_{\GBII}(x) =
	\tfrac{\Gamma(\beta+\gamma)}{\Gamma(\alpha)\Gamma(\beta+\gamma-\alpha)\,_2F_1(\alpha,\gamma;\beta+\gamma;1-q)} \tfrac{x^{\alpha - 1}}{(1 + x)^{\beta}(1 + q x)^{\gamma}} \, \1{(0, \infty)}(x).\label{gbII}
\end{align}

In case of $\BetaII(\alpha,\beta)$ the parameters satisfy $\beta>\alpha>0$ and in case of $\GBII(\alpha,\beta,\gamma;q)$ the parameters satisfy $q>0$   and $\beta + \gamma>\alpha > 0$. The normalizing constant in \eqref{gbII} is given e.g. in formula (1.2) in \cite{KLPW25}. Note that  $\GBII(\alpha,\beta,\gamma;1)=\BetaII(\alpha,\beta+\gamma)$ and additionally we define $\GBII(\alpha,\beta,\gamma;0)=\BetaII(\alpha,\beta)$ when $\beta>\alpha>0$.

\bigskip
According to  \cite{PSTV2010},
\begin{align}\label{HAB}
    H_{\mathrm{I}}^{+, A, B}(x,y)
	= \paren[\Big]{\tfrac{y}{A}\,\tfrac{B + Ax + By + ABxy}{1 + x + y + Bxy},\; \tfrac{x}{B}\,\tfrac{A+Ax+By+ABxy}{1 + x+y+Axy}}, \quad x,y>0,
\end{align}
with $A,B \neq 0$, is a family of maps which is at the top of the hierarchy of $[2:2]$ quadrirational Yang-Baxter maps. Recall, see \cite{SasUoz2024}, that $H_{\mathrm{I}}^{+, A, B}$ for $A,B> 0$ are IP maps for generalized second kind beta laws. Respective  characterizations have been derived recently in \cite{KLPW25}. Some  of them use $H_{\mathrm{I}}^{+,A,\infty}$ which is defined as the pointwise limit 
\begin{equation}\label{HA}
H_{\mathrm{I}}^{+,A,\infty}(x,y):=\lim_{B\to\infty}\,H_{\mathrm{I}}^{+,A,B}(x,y)=\left(\tfrac{1+y+Axy}{Ax},\,\tfrac{xy(1+Ax)}{1+x+y+Axy}\right),\quad x,y>0
\end{equation}
and, by symmetry,
\begin{equation}\label{HB}
H_{\mathrm{I}}^{+,\infty,B}(x,y)=\left(\tfrac{xy(1+By)}{1+x+y+Bxy},\,\tfrac{1+x+Bxy}{By}\right),\quad x,y>0.
\end{equation}
We will relate these IP maps to the bivariate prototypes of the first and second kind tree beta models.

\bigskip
Let $q \leq 1$ and $a, b, \beta > 0$. Consider a bivariate random vector $K=(Y,V)$ with the density
\begin{equation}\label{fI2}
f_K(y,v)\propto y^{a - 1} v^{b - 1} (1 - y - v + qyv)^{\beta - 1} \, \1{\mathcal D}(y, v),
\end{equation}
where  $\mathcal D = \Set{(y, v) \in(0, 1)^{2} | 1 - y - v + qyv > 0}$. (Note that the function on the right-hand side of \eqref{fI2} is non-negative and integrable.) Set $X=V\, \tfrac{1 - qY}{1 - Y}$ and $U=Y\, \tfrac{1 - qV}{1 - V}$. 
Then
\begin{align}
	(U, V)
    = H_q(X,Y)
\end{align}
where
\begin{equation}\label{Hqq}
H_q(x,y) :=\paren[\Big]{y \,\tfrac{1 - qx - qy + qxy}{1 - x - qy + xy},\;x \,\tfrac{1 - y}{1 - qy}}, \quad (x,y)\in(0,1)^2.
\end{equation}
From \eqref{fI2}, by direct computation (and trivially in case $q=1$), we get
\begin{align}
	(X, Y)
	&\sim
	\BetaI(b, \beta) \otimes \GBI(a, \beta + b, b; q),\label{XYI}
	\\
	(U, V)
	&\sim
	\BetaI(a, \beta) \otimes \GBI(b, \beta + a, a; q).\label{UVI}
\end{align}
Thus $H_q$ is an IP map. 

A related independence characterization can be also deduced from a result in \cite{KLPW25}.
\begin{proposition}\label{2bI}
    Let  $X$ and $Y$ be $(0,1)$-valued, non-Dirac and independent random variables and $(U,V)=H_q(X,Y)$ for some $q<1$. Assume also that $U$ and $V$ are independent. Then there exist $a,b,\beta>0$ such that \eqref{XYI} and \eqref{UVI} hold.
\end{proposition}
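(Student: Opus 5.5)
The plan is to reduce the statement to a known characterization for the second-kind map from \cite{KLPW25}. I will conjugate $H_q$ by coordinatewise Möbius transformations that send $(0,1)$ onto $(0,\infty)$, so that it becomes one of the maps $H_{\mathrm{I}}^{+,A,B}$ or its limits \eqref{HA}, \eqref{HB}, with $A$ (or $B$) depending on $q$. The density identities \eqref{XYI}--\eqref{UVI} suggest the right conjugation. Under $x\mapsto x/(1-x)$ a $\BetaI(\alpha,\beta)$ law goes to $\BetaII(\alpha,\alpha+\beta)$. A $\GBI(\alpha,\beta,\gamma;q)$ law goes to a law with density proportional to
\[
s^{\alpha-1}(1+s)^{-(\alpha+\beta)+\gamma}(1+(1-q)s)^{-\gamma},
\]
which is a $\GBII$ law with parameter $1-q$. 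This is consistent with the $_2F_1(\cdot;\cdot;1-q)$ in \eqref{gbII}.

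So I will set $\tilde x=\phi_1(x)$, $\tilde y=\phi_2(y)$, $\tilde u=\phi_3(u)$, $\tilde v=\phi_4(v)$, with each $\phi_i$ one of $x\mapsto x/(1-x)$ or $x\mapsto (1-x)/x$ (possibly rescaled by a power of $1-q$). The first concrete step is to check by direct algebra that $(\tilde u,\tilde v)=H_{\mathrm{I}}^{+,A,\infty}(\tilde x,\tilde y)$ or $H_{\mathrm{I}}^{+,\infty,B}(\tilde x,\tilde y)$, possibly with the roles of the coordinates swapped, with $A=1-q$ or $A=(1-q)^{-1}$. I expect useful identities such as
\[
1-x-qy+xy=(1-x)(1-y)+(1-q)y,\qquad 1-qx-qy+qxy=(1-q)+q(1-x)(1-y)
\]
to make the rational expressions in \eqref{Hqq} factor into the form \eqref{HA}. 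The hypothesis $q<1$ is exactly what makes $A>0$, and it also makes each $\phi_i$ a bijection of $(0,1)$ onto $(0,\infty)$. Hence independence and the non-Dirac property of $X,Y$ and of $U,V$ transfer verbatim to $\tilde X,\tilde Y$ and $\tilde U,\tilde V$.

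Next I will invoke the characterization in \cite{KLPW25} for $H_{\mathrm{I}}^{+,A,\infty}$. For independent non-Dirac positive $\tilde X,\tilde Y$ with $(\tilde U,\tilde V)$ independent, it identifies all four laws as specific $\BetaII/\GBII$ laws with parameter $A$, under certain parameter constraints. Pulling these back through $\phi_i^{-1}$ by the computation above gives $\BetaI$ and $\GBI(\cdot,\cdot,\cdot;q)$ laws. I will then match exponents to express the \cite{KLPW25} parameters through three numbers $a,b,\beta$, and check that their admissibility conditions translate exactly into $a,b,\beta>0$, giving \eqref{XYI} and \eqref{UVI}. Consistency with the direct computation from \eqref{fI2} serves as a check on the parametrisation. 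The degenerate value $q=0$ gives $A=1$, where the $\GB$ laws reduce to ordinary beta laws; I will verify it is still covered, or otherwise treat it via the Dirichlet/neutrality characterization.

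The main obstacle is the first step: finding the exact conjugating Möbius maps, and possibly a swap of coordinates, under which $H_q$ becomes precisely a map covered by \cite{KLPW25}. It is also not immediate that the resulting parameter ranges are exactly $a,b,\beta>0$ and not some larger set. If no clean conjugation to $H_{\mathrm{I}}^{+,A,\infty}$ appears, my fallback is to try $H_{\mathrm{I}}^{+,A,B}$ itself with $B$ depending on $q$ as well. If that also fails, I will prove the statement directly by the standard route: with $K=(Y,V)$, the joint density of $K$ factorises in the two coordinate systems. Taking logarithms of the resulting functional equation, assuming densities exist (or else using Laplace-type transforms and regularity), one solves it to obtain the form \eqref{fI2}.
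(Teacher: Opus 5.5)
Your proposal follows the paper's route. The paper conjugates with $g(x)=(1-x)/x$ in all four coordinates, $H_q=(g^{-1},g^{-1})\circ H_{\mathrm{I}}^{+,\infty,1/(1-q)}\circ(g,g)$, applies Theorem 3.2 of \cite{KLPW25} via the symmetry $(s,t)=H_{\mathrm{I}}^{+,A,\infty}(x,y)\iff(t,s)=H_{\mathrm{I}}^{+,\infty,A}(y,x)$, and pulls the resulting $\BetaII/\GBII$ laws back to \eqref{XYI}--\eqref{UVI}. So the conjugation you were searching for is your $(1-x)/x$ option with $B=(1-q)^{-1}$. Neither your fallbacks nor a separate treatment of $q=0$ are needed.
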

\begin{proof}
Note that, see \eqref{HB},
\begin{equation}\label{Gq}
H_q=(g^{-1},g^{-1})\circ H_{\mathrm{I}}^{+,\infty,\frac1{1-q}}\circ (g,g).
\end{equation}

   Denote $W':=g(W)$ for $W\in\{X,Y,U,V\}$. Thus $X'$ and $Y'$ are independent and, in view of \eqref{Gq} we have  
   \[
   (U',V')=H_{\mathrm{I}}^{+,\infty,\frac1{1-q}}(X',Y').
   \]
   Since $U'$ and $V'$ are independent, from Theorem 3.2 of \cite{KLPW25} (note that this result is for $H_{\mathrm{I}}^{+,A,\infty}$ so one needs to rely on the symmetry $(s,t)=H_{\mathrm{I}}^{+,A,\infty}(x,y)$ iff $(t,s)=H_{\mathrm{I}}^{+,\infty,A}(y,x)$), we conclude that  
   \begin{align}
	(X', Y')
	&\sim
	\BetaII(\beta,\beta+b) \otimes \GBII(\beta+b, \beta +a, b; \tfrac1{1-q}),
	\\
	(U', V')
	&\sim
	\BetaII(\beta,\beta+a) \otimes \GBII(\beta+a, \beta +b, a; \tfrac1{1-q}).
\end{align}The final result follows on noting that if $W\sim\Beta_{\mathrm{II}}(\kappa,\lambda)$, $\lambda>\kappa>0$, then $g^{-1}(W)\sim\Beta_{\mathrm{I}}(\lambda-\kappa,\kappa)$ and if $W\sim\GBII(\kappa,
\lambda,\gamma;p)$, $p> 0$, $\lambda+\gamma>\kappa>0$ the $g^{-1}(W)\sim \GBI(\lambda+\gamma-\kappa,\kappa,\gamma,\tfrac{p-1}{p})$.
\end{proof}
We remark that when $q = 0$ then \eqref{fI2} is the density of the bivariate classical Dirichlet distribution and the independence properties generated by $H_0$ refer to the simplest version of the neutrality property of the Dirichlet distribution; see e.g. \cite{ChamLet1991, DarrochRatcliff1971}. Actually, with a suitable re-definition $Y':=1-Y$, this is equivalent to
\begin{align}
	(X, Y') \sim \BetaI(b, \beta) \otimes \BetaI(b + \beta,a),
\end{align}
and
\begin{align}
	\paren[\Big]{1 - XY', \, \tfrac{1 - Y'}{1 - XY'}}
	\sim
	\BetaI(a + \beta, b) \otimes \BetaI(a, \beta).
\end{align}

\bigskip
The second-kind analogue reads as follows. 
Let either $q> 0$, $\beta > 0$ and $a, b \in(0, \beta)$ or $q=0$, $a,b>0$, $a+b<\beta$. Consider a bivariate random vector $K=(Y,V)$ with the density
\begin{equation}
\label{fII2}
	f_K(y,v)\propto y^{a - 1} v^{b - 1} (1 + y + v + qyv)^{- \beta} \, \1{(0, \infty)^{2}}(y, v).   
\end{equation}
(Note that the function on the right-hand side above is non-negative and integrable.) Let $X:=V\tfrac{1+qY}{1+Y}$ and $U:=Y\tfrac{1+qV}{1+V}$. Then 
\begin{align}
	\label{eq:bivariate-root-switch-formula}
	(U, V)
	= h_q(X, Y),
\end{align}
where 
\begin{align}
	\label{Hq}
	h_q(x, y)
	= \paren[\Big]{y\, \tfrac{1 + qx + qy + qxy}{1 + x + qy + xy}, x\, \tfrac{1 + y}{1 + qy}},\quad x,y >0.
\end{align}
Thus from \eqref{fII2}, by direct computation (and trivially in case $q=1$), we get 
\begin{align}
	(X, Y)
	&\sim
	\BetaII(b, \beta) \otimes \GBII(a, \beta - b, b; q),\label{XY}
	\\
	(U, V)
	&\sim
	\BetaII(a, \beta) \otimes \GBII(b, \beta - a, a; q).\label{UV}
\end{align}
Hence $h_q$ is an IP map.

The  characterization based on this IP map follows from  two characterizations given in \cite{KLPW25}.

\begin{proposition}\label{2bII}
Let  $X$ and $Y$ be positive, non-Dirac and independent random variables and $(U,V)=h_q(X,Y)$ for some $1\neq q\geq 0$. Assume also that $U$ and $V$ are independent. Then there exist $\beta>0$ and $a,b\in (0,\beta)$ (in case $q=0$ additionally $\beta>a+b$) such that \eqref{XY} and \eqref{UV} hold.
\end{proposition}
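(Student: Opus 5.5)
The plan is to follow the proof of Proposition~\ref{2bI}: conjugate $h_q$ by coordinatewise bijections of $(0,\infty)$ into one of the maps $H_{\mathrm{I}}^{+,A,B}$, $H_{\mathrm{I}}^{+,A,\infty}$ or $H_{\mathrm{I}}^{+,\infty,B}$ from \eqref{HAB}--\eqref{HB}, for which characterizations are proved in \cite{KLPW25}. We then apply the relevant theorem there and push the resulting laws back through the conjugating bijections.

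\emph{Step 1 (conjugation).} I would look for bijections $\phi_1,\phi_2$ of $(0,\infty)$ and an edge parameter depending on $q$ such that
\[
h_q=(\phi_2^{-1},\phi_1^{-1})\circ H\circ(\phi_1,\phi_2),
\]
with $H$ one of the three maps above. Natural candidates for $\phi_i$ are scalings $w\mapsto cw$ (e.g.\ $c=q$ or $c=1/q$) composed possibly with inversions $w\mapsto 1/w$. The guide is the structure of the two components:
\begin{itemize}
\item the second component $x\frac{1+y}{1+qy}$ is a ``ratio'' map;
\item the first component has numerator $1+qx+qy+qxy$ and denominator $1+x+qy+xy$.
\end{itemize}
Matching numerator and denominator with $B+Ax+By+ABxy$ and $1+x+y+Bxy$ (or with the $B\to\infty$ limits) should pin down the parameter as a function of $q$, for instance $B=q$ or $B=1/q$. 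The positivity requirement in \cite{KLPW25} may force a split into the cases $0<q<1$ and $q>1$, possibly with the roles of the two coordinates swapped via the symmetry $(s,t)=H_{\mathrm{I}}^{+,A,\infty}(x,y)$ iff $(t,s)=H_{\mathrm{I}}^{+,\infty,A}(y,x)$. The case $q=1$ is excluded because $h_1(x,y)=(y,x)$ is trivial. The case $q=0$ is degenerate: there $h_0(x,y)=\bigl(y/(1+x+xy),\,x(1+y)\bigr)$ should conjugate to the limit map (one parameter sent to $\infty$ or $0$), which is the second characterization of \cite{KLPW25} alluded to in the text.

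\emph{Step 2 (apply the characterizations).} With $X':=\phi_1(X)$ and $Y':=\phi_2(Y)$, the variables $X',Y'$ are independent and non-Dirac, and $(U',V')=H(X',Y')$ has independent components. The appropriate theorem of \cite{KLPW25} then yields $\BetaII\otimes\GBII$ laws for $(X',Y')$ and $(U',V')$, with parameters determined up to a free triple.

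\emph{Step 3 (transfer back).} I would compute the images of $\BetaII$ and $\GBII$ under the chosen bijections:
\begin{itemize}
\item under $w\mapsto 1/w$, $\BetaII(\alpha,\beta)\mapsto\BetaII(\beta-\alpha,\beta)$, and $\GBII$ goes to a $\GBII$ with the parameter $q$ replaced by $1/q$ and shifted exponents;
\item under scaling, $\GBII(\cdot;p)$ goes to $\GBII(\cdot;p')$ after absorbing constants.
\end{itemize}
Renaming the free parameters as $a,b,\beta$ should then give exactly \eqref{XY} and \eqref{UV}, including the constraints $a,b\in(0,\beta)$. For $q=0$, the integrability of $\GBII(a,\beta-b,b;0)=\BetaII(a,\beta-b)$ forces $\beta-b>a$, i.e.\ $\beta>a+b$.

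The main obstacle is Step 1: finding the explicit conjugating bijections and the corresponding parameter for each range of $q$, and checking that the parameter lands in the range covered by \cite{KLPW25}. I would first try $\phi_1=\phi_2=\mathrm{id}$ up to scalings and inversions, matching the rational expressions term by term, before considering more general Möbius maps.
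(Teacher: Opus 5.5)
\textbf{Gap: the conjugation in Step~1 is never found.} Your overall strategy is the paper's: conjugate $h_q$ into an $H_{\mathrm{I}}^{+}$ map, apply \cite{KLPW25}, and transport the laws back. But the proposal stops where the proof begins. Step~1 is left as an open ``obstacle'' with guesses ($B=q$ or $B=1/q$, a possible split at $q=1$), so Steps~2 and~3 have nothing to act on. The missing ingredient is an explicit identity.

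For $q>0$, a direct computation gives $(u,v)=h_q(x,y)$ iff $(u,qv)=H_{\mathrm{I}}^{+,1,1/q}(x,qy)$, that is, $h_q=(\mathrm{id},q^{-1}\,\cdot)\circ H_{\mathrm{I}}^{+,1,1/q}\circ(\mathrm{id},q\,\cdot)$. Theorem~1.2 of \cite{KLPW25} with $A=1$, $B=1/q$ then covers every $q\in(0,\infty)\setminus\{1\}$ at once. No split is needed, and $q\neq 1$ is exactly $A\neq B$; for $A=B$ the map is the swap, matching your remark on $h_1$. For $q=0$ one has $h_0=(g,g)\circ H_{\mathrm{I}}^{+,1,\infty}\circ(g,g)$ with $g(w)=1/w$, and Theorem~3.2 of \cite{KLPW25} applies.

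Note the pairing in the $q>0$ identity: the output rescaling sits on $v$, the same coordinate as the input rescaling of $y$. With pure scalings, your ansatz $(\phi_2^{-1},\phi_1^{-1})\circ H_{\mathrm{I}}^{+,A,B}\circ(\phi_1,\phi_2)$ has no solution. For $q\neq1$ the target fraction $\frac{1+qx+qy+qxy}{1+x+qy+xy}$ is in lowest terms, so matching denominators forces $\phi_1=\mathrm{id}$, $\phi_2=q\,\cdot$, $B=1/q$, and then the numerator forces $q=1$. Your candidate list does contain a working choice, though. One checks that $h_q=(g,g)\circ H_{\mathrm{I}}^{+,1,1/q}\circ(g,g)$ for every $q>0$; the paper's $q=0$ identity is then its $B\to\infty$ limit, which gives a uniform treatment of all $q\ge 0$.

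\textbf{Step~3 must be made precise.} ``Absorbing constants'' does not say how the parameters move. The rule needed with the paper's conjugation is that $W\sim\GBII(\alpha,\beta,\gamma;\kappa)$ implies $\kappa W\sim\GBII(\alpha,\gamma,\beta;1/\kappa)$: the second and third parameters are exchanged. This turns $qY\sim\GBII(a,b,\beta-b;1/q)$ into $Y\sim\GBII(a,\beta-b,b;q)$, and similarly for $V$.

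With the inversion conjugation you need instead:
\begin{itemize}
\item $\BetaII(A,B)\mapsto\BetaII(B-A,B)$;
\item $1/W\sim\GBII(\beta+\gamma-\alpha,\beta,\gamma;1/\kappa)$;
\item the renaming $(a,b)\mapsto(\beta-a,\beta-b)$, which preserves $a,b\in(0,\beta)$.
\end{itemize}
Once these identities are written down, the rest of your outline goes through, including the $\beta>a+b$ argument for $q=0$.
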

\begin{proof}
Note that when $q>0$ for $x,y>0$, see \eqref{HAB},
\begin{align}
	\label{HI}
(u, v) = h_q(x, y)\quad\Leftrightarrow\quad    (u, qv) = H_{\mathrm{I}}^{+, 1, 1/q}(x, qy).
\end{align}
In view of \eqref{HI}, the definition of $(U,V)$ implies  $(U,qV)=H_{\mathrm{I}}^{+,1,1/q}(X,qY)$. Therefore 
 by the bivariate characterization of the generalized second kind beta laws related to IP maps $H^{+,A,B}$, $A,B> 0$, see Theorem 1.2 of \cite{KLPW25} for $A=1$ and $B=1/q$, we conclude that there exist $\beta>0$ and $a,b\in (0,\beta)$ such that
 \[
 (X,qY)\sim \BetaII(b, \beta) \otimes \GBII(a,  b,\beta - b; 1/q)
 \]
 and
 \[
 (U,qV)\sim \BetaII(a, \beta) \otimes \GBII(b, a, \beta - a; 1/q).
 \]
Since $W\sim\mathrm{GB}_{\mathrm{II}}(\alpha,\beta,\gamma;\kappa)$ implies $\kappa W\sim \mathrm{GB}_{\mathrm{II}}(\alpha,\gamma,\beta;1/\kappa)$ one gets  \eqref{XY} and \eqref{UV} by taking $W=qY$ or $W=qV$ and $\kappa=1/q$.

In case $q=0$ we have, see \eqref{HA},
\begin{equation}\label{HI0}
h_0=(g,g)\circ H_{\mathrm{I}}^{+,1,\infty}\circ(g,g),\quad\text{where } g(x)=1/x, \;x>0.
\end{equation}
Denote $W':=g(W)$ for $W\in\{X,Y,U,V\}$.  Thus, $X'$ and $Y'$ are independent,  $U'$ and $V'$ are independent and  $(U',V')= H_{\mathrm{I}}^{+,1,\infty}(X',Y')$. Then referring to Theorem 3.2 of \cite{KLPW25} we conclude that there exist $a,b,\beta>0$ with $\beta>a+b$ such that
   \begin{align}
	(X', Y')
	&\sim
	\BetaII(\beta-b,\beta) \otimes \BetaII(\beta-a-b, \beta -b),
	\\
	(U', V')
	&\sim
	\BetaII(\beta-a,\beta) \otimes \BetaII(\beta-a-b, \beta -a).
\end{align}
Note that $W'\sim \mathrm{Beta}_{\mathrm{II}}(A,B)$, $B>A>0$, iff $W=\tfrac1{W'}\sim \mathrm{Beta}_{\mathrm{II}}(B-A,B)$.
Applying these two observations to $X'$, $Y'$, $U'$, $V'$ yields \eqref{XY} and \eqref{UV} for $q=0$.
\end{proof}

Thus, the  bivariate characterisations theorem of \cite{KLPW25}, up-graded slightly to Propositions \ref{2bI} and \ref{2bII} above,  show that, for $q < 1$ in the first-kind case and for $q \in(0, \infty) \setminus \{1\}$ in the second-kind case, the law of $K = (Y, V)$ is uniquely determined by respective independence properties. Namely, the condition that $(X,Y)$ and $(U,V)$ have both independent components implies that $K=(Y,V)$ is distributed as the above specific distribution for each case.
The present paper extends these bivariate properties and characterisations to a multivariate tree-generated setting.

\subsection{Tree-beta models of the first and second kind}
We first introduce two graph-related polynomials which generalize $\Delta_G$ of \eqref{dDelta} and $\delta_G$  of \eqref{ddelta} in case when decomposable graph  $G$ is a tree.
\begin{definition}[Graph polynomials]
	\label{def:polynomial-pG}
	Let $G = (V, E)$ be an undirected graph and let $q = (q_{e})_{e \in E} \in \R^{E}$.
	For $k = (k_{v})_{v \in V} \in \R^{V}$, define polynomials
	\begin{align}
		&\Delta_{G}(k)
		\coloneqq
		\sum_{U \subseteq V} (-1)^{|U|}\,q^{E_U}k^{U},\\
        &\delta_{G}(k)
		\coloneqq 
		\sum_{U \subseteq V}  q^{E_U}k^{U},
	\end{align}
    with $k^{U}=\prod_{v \in U} k_{v}$ and $q^{E_U}=\prod_{e \in E_U} q_{e}$, where $E_U$ denotes the set of edges from $E$ with both ends in $U$, and the convention that $\prod_{v \in \varnothing}k_v=1$ and $\prod_{e \in \varnothing}q_e=1$ for any $k$ and $q$.

 For $U\subset V$, we  write $\Delta_U(k):=\Delta_{G_U}(k|_U)$ and $\delta_U(k):=\delta_{G_U}(k|_U)$.
\end{definition} 

Note that in case $q_e=0$ for all $e\in E$  the right-hand sides above agree with respective right-hand sides in \eqref{dDelta} and \eqref{ddelta}. 

\bigskip
 Now we introduce multivariate tree versions of univariate first and second kind beta distributions (they can be also considered as tree versions of multivariate Dirichlet and inverted Dirichlet distributions).

 \begin{definition}
Let $G=(V,E)$ be a tree with edge weights $q=(q_e)_{e\in E}$. We say that a random vector $K$ has
\begin{enumerate}
    \item the first kind tree-beta distribution, $\mathrm{TB}^{G}_{\mathrm I}(a,\beta,q)$, where $a=(a_v)_{v\in V}\in(0,\infty)^V$, $\beta>0$, $q\in(-\infty,1)^E$, is defined by the density
    \begin{equation}\label{denI}
    f_{\mathrm I}^G(k)=\,c_{\mathrm I}\, [\Delta_G(k)]^{\beta-1}\,\left(\prod_{v\in V}\,k_v^{a_v-1}\right)\,\mathbf 1_{\mathcal D}(k),
    \end{equation}
    where $c_{\mathrm I}$ is the normalizing constant and 
    \begin{align}
        \label{DG}
		\mathcal D
		\coloneqq
		\Set{k \in (0,1)^V| \Delta_U(k) > 0, \; \forall\; U \subseteq V}
	\end{align}
with $\Delta_A(k):=\Delta_{G_A}(k|_A)$.
    \item the second kind tree-beta distribution, $\mathrm{TB}^{G}_{\mathrm{II}}(a,\beta,q)$, where  $\beta >0$, $a\in(0, \beta)^V$, $q\in[0,\infty)^E$ such that $a_v + a_w < \beta$ if $q_{v, w} = 0$, is defined by the density
    \begin{equation}\label{denII}
    f_{\mathrm{II}}^G(k)=\,\tfrac{c_{\mathrm{II}} }{[\delta_G(k)]^{\beta}}\,\Biggl(\prod_{v\in V}\,k_v^{a_v-1}\Biggr)\,\mathbf 1_{(0,\infty)^V}(k),
    \end{equation}
    where $c_{\mathrm{II}}$ is the normalizing constant.
\end{enumerate}
 \end{definition}

The above distributions are well-defined, i.e. the functions \eqref{denI} and \eqref{denII} are non-negative and integrable within the given range of parameters. Probably, the easiest way to see it is through the factorizations of these densities, which follow from independence properties established in Theorem \ref{direct} together with known ranges of parameters for (generalized) $\mathrm{Beta}_{\mathrm I}$ and $\mathrm{Beta}_{\mathrm{II}}$ distributions.  

 Note also that, $\mathrm{TB}_{\mathrm I}^G$ and $\mathrm{TB}_{\mathrm{II}}^G$, when $q_e=0$ for all $e\in E$, are, respectively, the  graph Dirichlet and the graph inverted Dirichlet distributions of \cite{DKWZ25} when the related decomposable graph is a tree. Furthermore, in the case of the simplest tree with just two vertices the above densities agree, respectively, with \eqref{fI2} and \eqref{fII2}. 

 \bigskip
 To extend the independence properties of the bivariate prototypes, we introduce suitable maps which are supposed to extend maps $(Y,V)\mapsto(Y,X)$ and $(Y,V)\mapsto(U,V)$ of the bivariate cases. To construct such maps, similarly as in the MY and HV tree models, we use also directed $r$-rooted trees $G^{(r)}$ with common undirected skeleton tree $G=(V,E)$. Besides the set of children of $v\in V$ in $G^{(r)}$, we also need $\de^{(r)}(v)$, descendants of $v$, and $\debar^{(r)}(v)$, the closure of the set of descendants of $v$, which are  defined by
	\begin{align}
		\de^{(r)}(v) &:= \{w\in V\setminus \{v\}: \;\exists \text{ a directed path in }G^{(r)} \text{ from  } v \text{ to } w\}, &
		\debar^{(r)}(v) &:= \de^{(r)}(v)\cup\{v\}.
	\end{align}
	
 \begin{definition}
	\label{maps}
    	Fix an arbitrary $r \in V$. 
        \begin{enumerate}
\item For $q=(q_e)_{e\in E}\in (-\infty,1)^E$, let $\Psi^{(r)}= (\Psi_{v}^{(r)})_{v \in V} \colon \mathcal D  \to (0,1)^V$ be a map defined by       
	\begin{equation}\label{Psi}
		\Psi_{v}^{(r)}(k)
		= 1-\tfrac{\Delta_{\debar^{(r)}(v)}(k)}{\Delta_{\de^{(r)}(v)}(k)},\quad v\in V\quad\text{for } k\in\mathcal D.
        \end{equation}
        \item For $q=(q_e)_{e\in E}\in [0,\infty)^E$, let $\psi^{(r)} = (\psi_{v}^{(r)})_{v \in V} \colon (0,\infty)^V \to (0,\infty)^V$ be a map defined by
        \begin{equation} \label{psi}
        \psi_{v}^{(r)}(k)
		= \tfrac{\delta_{\debar^{(r)}(v)}(k)}{\delta_{\de^{(r)}(v)}(k)}-1,\quad v\in V\quad\text{for } k\in(0,\infty)^V.
        \end{equation}
\end{enumerate}
    \end{definition}

Lemma \ref{well_def} actually proves that the co-domains of $\Psi^{(r)}$ and $\psi^{(r)}$, as given above, are the correct ones. As we shall see in Theorem \ref{direct}, the maps $\Psi^{(r)}$ and $\psi^{(r)}$ are actually IP maps for the first and the second kind tree-beta models, respectively.  

To state the main results of this paper, we need to introduce some additional notations. Fix an arbitrary $r\in V$. For each $v \in V \setminus \{r\}$, there is a unique vertex in the directed tree $G^{(r)}$, denoted by $v_r:=\pa^{(r)}(v)$ and called the parent of $v$ in $G^{(r)}$, such that $v_r\to v$. 

\bigskip
The first set of two main results says that the maps $\Psi^{(r)}$ and $\psi^{(r)}$ are actually IP maps for tree-beta models. 

    \begin{theorem}\label{direct} Let $G=(V,E)$ be a tree with edge weights $q=(q_e)_{e\in E}$.
    \begin{enumerate}
    \item[(i)] Assume that $q\in(-\infty,1)^E$, $\beta\in(0, \infty)$ and $a\in(0, \infty)^V$. For a random vector $K$ taking values in $\mathcal{D}$,
				\begin{align}
					K \sim \mathrm{TB}^G_{\mathrm I}(a,\beta,q)
				\end{align}
                if and only if for any $r \in V$, the random vector $\Psi^{(r)}(K)=(\Psi^{(r)}_v(K))_{v\in V}$ has independent components with distributions
				\[
                \Psi^{(r)}_r(K)\sim\BetaI(a_r,\beta)
                \]
                and for $v\in V\setminus\{r\}$
                \[
					\Psi^{(r)}_v(K)
					\sim
					\GBI(a_{v}, \beta + a_{v_r}, a_{v_r}; q_{v_r,v}).
				\]            
		\item[(ii)] Assume that $q\in[0,\infty)^E$, $\beta\in(0, \infty)$ and $a\in(0, \beta)^V$ such that $a_v + a_w < \beta$ if $q_{v, w} = 0$. For a random vector $K$ taking values in $(0,\infty)^V$,
				\begin{align}
					K
					\sim \TB^{G}_{\mathrm{II}}(a,\beta,q)
				\end{align}
				if and only if for any $r \in V$, the random vector $\psi^{(r)}(K)=(\psi^{(r)}_v(K))_{v\in V}$ has independent components with distributions
				\[
                \psi^{(r)}_r(K)\sim\BetaII(a_r,\beta)
                \]
                and for $v\in V\setminus\{r\}$
                \[
					\psi^{(r)}_v(K)
					\sim
					\GBII(a_{v}, \beta - a_{v_r}, a_{v_r}; q_{v_r,v}).
				\]            
	\end{enumerate}
\end{theorem}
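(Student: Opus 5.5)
The plan is a change of variables from $k$ to $x=\Psi^{(r)}(k)$ (resp.\ $x=\psi^{(r)}(k)$) for a fixed root $r$. We compute the Jacobian and show that the transformed density factorizes into the claimed product of (generalized) beta densities. The ``if'' direction is then immediate: one fixed $r$ already determines the law of $K=(\Psi^{(r)})^{-1}(X)$, because $\Psi^{(r)}$ is a bijection onto $(0,1)^V$ (resp.\ $\psi^{(r)}$ onto $(0,\infty)^V$). So the content is the ``only if'' direction, together with bijectivity.

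\textbf{Step 1: tree recursions.} First I would establish the two deletion recursions for a vertex $v$ with children $\ch^{(r)}(v)=\{w_1,\dots,w_m\}$. Conditioning on whether $v\in U$, and using that $\debar^{(r)}(v)\setminus\{v\}$ is the disjoint union of the subtrees $\debar^{(r)}(w_i)$, gives
\[
\Delta_{\debar(v)}=\prod_i\Delta_{\debar(w_i)}-k_v\prod_i\bigl(\Delta_{\debar(w_i)}-q_{v,w_i}(\Delta_{\debar(w_i)}-\Delta_{\de(w_i)})\bigr).
\]
Dividing by $\Delta_{\de(v)}=\prod_i\Delta_{\debar(w_i)}$ yields
\[
\Psi^{(r)}_v=k_v\prod_{w\in\ch(v)}\bigl(1-q_{v,w}\Psi^{(r)}_w\bigr)\Big/\bigl(1-\Psi^{(r)}_w\bigr),
\]
which is the tree analogue of $X=V(1-qY)/(1-Y)$. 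The $\delta$ version gives
\[
\psi_v=k_v\prod_{w\in\ch(v)}\frac{1+q_{v,w}\psi_w}{1+\psi_w}.
\]
These recursions make the map triangular with respect to the partial order ``descendant first''. Each $k_v$ is recovered from $x_v$ and $(x_w)_{w\in\ch(v)}$, so the inverse is explicit and the Jacobian is
\[
\Bigl|\frac{\partial k}{\partial x}\Bigr|=\prod_v\frac{\partial k_v}{\partial x_v}=\prod_v\prod_{w\in\ch(v)}\frac{1-x_w}{1-q_{v,w}x_w}
\]
(and similarly with plus signs in the second kind). Bijectivity onto $\mathcal D$ amounts to checking that $\Delta_U>0$ for all $U$ exactly when every $x_v\in(0,1)$. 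I take this as the content of Lemma \ref{well_def}, plus a small induction: positivity for all subsets reduces to positivity for the subtrees $\debar^{(r)}(v)$ and their unions, via multiplicativity over connected components.

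\textbf{Step 2: rewrite the density in $x$.} Telescoping gives
\[
\Delta_G=\Delta_{\debar(r)}=\prod_v\frac{\Delta_{\debar(v)}}{\Delta_{\de(v)}}=\prod_v(1-x_v).
\]
Next, $k_v=x_v\prod_{w\in\ch(v)}(1-x_w)/(1-q_{v,w}x_w)$. In $\prod_v k_v^{a_v-1}$ each non-root $w$ appears once through its parent $w_r$, contributing $\bigl((1-x_w)/(1-q_{w_r,w}x_w)\bigr)^{a_{w_r}-1}$. Multiplying by the Jacobian factor raises the exponent to $a_{w_r}$. The density of $X$ is therefore proportional to
\[
x_r^{a_r-1}(1-x_r)^{\beta-1}\prod_{v\neq r}x_v^{a_v-1}(1-x_v)^{\beta+a_{v_r}-1}(1-q_{v_r,v}x_v)^{-a_{v_r}},
\]
which is exactly $\BetaI(a_r,\beta)\otimes\bigotimes_{v\ne r}\GBI(a_v,\beta+a_{v_r},a_{v_r};q_{v_r,v})$.

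The second kind works the same way. Here $\delta_G=\prod_v(1+x_v)$, and the Jacobian is $\prod(1+x_w)/(1+q x_w)$ to the first power, so each non-root $w$ gets the exponent $-\beta+a_{w_r}$. This gives $x_v^{a_v-1}(1+x_v)^{-(\beta-a_{v_r})}(1+q x_v)^{-a_{v_r}}$, matching $\GBII(a_v,\beta-a_{v_r},a_{v_r};q)$. The parameter constraints in the statement are precisely the integrability conditions for these factors; for the case $q=0$, the factor combines into $\BetaII$ and requires $a_v+a_{v_r}<\beta$. This also shows that the densities are well defined and yields the normalizing constants as products of the univariate ones.

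\textbf{Main obstacle.} The delicate point is Step 1's domain statement: showing that $\Psi^{(r)}$ maps $\mathcal D$ bijectively onto $(0,1)^V$, i.e.\ that the triangular inverse starting from any $x\in(0,1)^V$ lands in $\mathcal D$, including positivity of $\Delta_U$ for non-subtree subsets $U$. I would first try an induction from leaves. From the recursion, $\Delta_{\debar(v)}/\Delta_{\de(v)}=1-x_v>0$ and $k_v<1$. For an arbitrary $U$, decompose $G_U$ into components. Each component is a subtree with its own root, and I would argue that its $\Delta$ equals the same kind of telescoping product for the induced directed tree. Positivity of the corresponding ratios follows from monotonicity in $q$ and in the removed vertices, which I expect to need $q<1$. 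The remaining computations are routine.
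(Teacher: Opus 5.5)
Your proposal is correct and follows the paper's proof essentially step for step: the explicit triangular inverse and Jacobian, the telescoping factorization $\Delta_G=\prod_v(1-x_v)$ (resp.\ $\delta_G=\prod_v(1+x_v)$), the factorization of the transformed density for a fixed root, the converse via bijectivity, and the domain issue settled as in Lemma \ref{well_def}. That lemma runs the recursion on the induced forest, and the monotonicity it needs is that $t\mapsto(1-qt)/(1-t)$ is increasing and exceeds $1$ on $(0,1)$ when $q<1$ (monotonicity in $t$, not in $q$). One slip: in your displayed deletion identity the $i$-th factor should be $\Delta_{\de(w_i)}+q_{v,w_i}\bigl(\Delta_{\debar(w_i)}-\Delta_{\de(w_i)}\bigr)$, since you swapped $\debar$ and $\de$ (check $q=0$); the recursion for $\Psi^{(r)}_v$ you then state is nevertheless the correct one.
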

Note that, when $q_e=0$ for all $e\in E$, the above result reduces to Theorem 5.2 of \cite{DKWZ25} in case the decomposable graph $G$ is a tree. Furthermore, it is a natural analogue of Theorem 3.1 of \cite{MasWes2004} for the MY tree model and of Theorem 2 of \cite{PilWes16} for the HV tree model.
\begin{remark}\label{rem1}
    In view of Lemma \ref{well_def} the maps $\Psi^{(r)}$ and $\psi^{(r)}$, $r\in V$, are bijective, and thus in the sufficiency parts of both cases in Theorem \ref{direct} it suffices to have the assumption valid just for a single $r\in V$.
\end{remark}

\bigskip
The second set of two main results are characterizations of tree-beta models by independence properties generated by a restricted collections of IG maps $\Psi^{(r)}$ and $\psi^{(r)}$. 

\begin{theorem}\label{Charact}$\,$
\begin{enumerate}
    \item Assume that $q\in(-\infty,1)^E$. If $K$ is a random vector taking values in $\mathcal D$ (which is defined in \eqref{DG}) such that for every leaf $\ell\in L_G$, the components of $\Psi^{(\ell )}(K)$ are independent and non-Dirac, then there exist $\beta \in (0, \infty)$ and $a \in (0, \infty)^{V}$ such that
				\begin{align}
					K \sim \mathrm{TB}^G_{\mathrm I}(a,\beta,q).
				\end{align}
     \item    Assume that $q\in[0,\infty)^E$ and $q_{e} \neq 1$ for every $e \in E$. If $K$ is a random vector taking values in $(0, \infty)^{V}$ such that for every leaf $\ell\in L_G$, the components of $\psi^{(\ell)}(K)$ are independent and non-Dirac, then there exist $\beta \in (0, \infty)$ and $a \in (0, \beta)^{V}$ with $a_v+a_w<\beta$ whenever $q_{w,v}=0$ such that  
				\begin{align}
					K \sim \mathrm{TB}^{G}_{\mathrm{II}}(a, \beta, q).
				\end{align}        
\end{enumerate}
\end{theorem}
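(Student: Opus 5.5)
We argue by induction on $|V|$, in the spirit of the characterizations of the MY and HV tree models in \cite{MasWes2004} and \cite{PilWes16}, with Propositions \ref{2bI} and \ref{2bII} as the base case and as the local tool. For $|V|=2$ both vertices are leaves. The two maps $\Psi^{(\ell)}$, $\ell\in V$, are exactly $(Y,V)\mapsto(Y,X)$ and $(Y,V)\mapsto(U,V)$, with $(U,V)=H_q(X,Y)$ (respectively $h_q$ in the second kind case). Proposition \ref{2bI} (resp. \ref{2bII}, where $q\neq1$ is used) then gives the laws \eqref{XYI}, \eqref{UVI} (resp. \eqref{XY}, \eqref{UV}), and hence $K\sim\TB^G_{\mathrm I}$ (resp. $\TB^G_{\mathrm{II}}$) by Theorem \ref{direct}.

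For the inductive step, take two distinct leaves $\ell,\ell'$ and consider an edge $\{u,w\}$. The key structural fact, to be extracted from the Section~2 properties of $\Delta_G$, $\delta_G$ and the recursive structure of $\Psi^{(r)}$, is a \emph{root-switch formula}. If $u\to w$ is an edge of $G^{(r)}$, then $\Psi^{(w)}$ coincides with $\Psi^{(r)}$ in every coordinate except those at $u$ and $w$. On those two coordinates,
\[
\bigl(\Psi^{(w)}_u,\Psi^{(w)}_w\bigr)=H_{q_{u,w}}\bigl(\Psi^{(r)}_w,\Psi^{(r)}_u\bigr)
\]
in suitably normalized variables. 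The normalization should be the ratio-of-polynomials form \eqref{Psi} restricted to the subtrees hanging off $u$ and $w$; the bivariate identity $(U,V)=H_q(X,Y)$ is its $|V|=2$ instance.

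With this formula, the plan is as follows.
\begin{enumerate}
\item Fix a leaf $\ell$ and a path $\ell=v_0,v_1,\dots,v_m=\ell'$ to another leaf. Moving the root along this path changes one pair of coordinates at a time.
\item Show that the joint law of $\Psi^{(v_i)}(K)$ still has independent components at each intermediate root. This does not follow for free, since only leaf roots are assumed.
\item Apply Proposition \ref{2bI} (or \ref{2bII}) to the pair $(\Psi^{(v_i)}_{v_{i+1}},\Psi^{(v_i)}_{v_i})$. This pair is independent of the remaining coordinates, and after the switch it becomes an independent pair again. The output is the generalized beta laws of that pair with parameters $a_{v_i},a_{v_{i+1}},\beta_i$.
\end{enumerate}

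Step 2 is the main obstacle, and the route I would try first is a reduction to subtrees. Removing a leaf $\ell''\neq\ell$ gives a subtree $G'=G_{V\setminus\{\ell''\}}$. The coordinates $\Psi^{(\ell)}_v(K)$ with $\ell''\notin\debar^{(\ell)}(v)$ depend only on $K|_{V'}$. Conditioning on, or integrating out, the leaf coordinate should exhibit a lower-dimensional vector satisfying the hypothesis on $G'$ for all leaves of $G'$ that are leaves of $G$. Here leaves of $G'$ created by the deletion need a separate argument via the two-vertex case at the neighbour of $\ell''$.

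Alternatively, one can avoid intermediate roots altogether. Writing $K=(\Psi^{(\ell)})^{-1}(X)=(\Psi^{(\ell')})^{-1}(X')$ with independent vectors $X$ and $X'$, one can use a Lukacs-type functional equation for the densities. The difficulty is that densities are not assumed to exist, so I would first establish positivity and smoothness of the densities by a convolution argument, or work with Laplace/Mellin-type transforms as in \cite{KLPW25}, and then solve the resulting functional equation edge by edge.

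Finally, consistency of the parameters must be checked. The $\beta$ obtained on different edges must agree, since each $\Psi^{(r)}_r$ is $\BetaI(a_r,\beta)$ and adjacent edges share a vertex, and the $a_v$ must be compatible across edges. Connectedness of $G$ then propagates a single $\beta$ and a single vector $a$. The density of $\Psi^{(\ell)}(K)$ is then the product form of Theorem \ref{direct}, whose sufficiency part together with Remark \ref{rem1} yields $K\sim\TB^G_{\mathrm I}(a,\beta,q)$. The second kind case is identical, with the constraint $a_v+a_w<\beta$ on edges with $q_{v,w}=0$ inherited from the $q=0$ case of Proposition \ref{2bII}.
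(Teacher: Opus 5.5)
The gap is exactly the step you flag as the main obstacle. Step~2, the independence of the components of $\Psi^{(v)}(K)$ at non-leaf roots $v$, is never proved, and the reduction you sketch for it does not work as stated. Neither the marginal law of $K|_{V'}$ nor its conditional law given $K_{\ell''}$ satisfies the hypothesis on $G'=G_{V\setminus\{\ell''\}}$. By \eqref{Psi_rep}, for a leaf root $r\neq\ell''$ the coordinate $\Psi^{(r)}_p(K)$ at the neighbour $p$ of $\ell''$ carries the factor $\frac{1-q_{\ell'',p}K_{\ell''}}{1-K_{\ell''}}$. This factor propagates to every vertex on the path from $p$ to $r$, so $\Psi^{(r)}_{G'}(K|_{V'})\neq\Psi^{(r)}(K)|_{V'}$. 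Already for the three-vertex path with edges $\{1,2\},\{2,3\}$, integrating $k_3$ out of the $\TB_{\mathrm I}$ density leaves a factor $(1-k_1-q_{2,3}k_2+q_{1,2}q_{2,3}k_1k_2)^{-a_3}$, so $(K_1,K_2)$ is not tree-beta.

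The missing idea is to absorb the removed leaf into its neighbour. Set $\check K_p:=K_p\frac{1-q_{\ell'',p}K_{\ell''}}{1-K_{\ell''}}$ (with $+$ signs in case (ii)) and $\check K_u:=K_u$ otherwise. The recursion then gives $\Psi^{(r)}_{G'}(\check K)=\Psi^{(r)}(K)|_{V'}$ exactly for all $r\in V'$. This is the paper's route, by induction on $|E|$:
\begin{enumerate}
\item The hypothesis at the new leaf $p$ comes from Lemma~\ref{rootchange} alone, since $\Psi^{(p)}_p(K)$ is a function of $(\Psi^{(\ell'')}_{\ell''}(K),\Psi^{(\ell'')}_p(K))$; no bivariate characterization is used there.
\item The induction hypothesis gives the law of $\check K$.
\item Independence of $\Psi^{(p)}_{\ell''}(K)=K_{\ell''}$ and $\Psi^{(p)}_p(K)$ is obtained through a second leaf.
\item A single application of Proposition~\ref{2bI} (resp.\ \ref{2bII}) on the edge $\{\ell'',p\}$ closes the step, with parameters matched through the law of $\Psi^{(p)}_p(K)$.
\end{enumerate}
In case (i) one must also check $\check K\in\mathcal D_{G'}$ a.s. The paper does this by writing $\check K=\Phi^{(r)}_{G'}\bigl((\Psi^{(r)}_v(K))_{v\in V'}\bigr)$ for a leaf $r$ common to $G$ and $G'$. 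Your Lukacs-type alternative is only gestured at, and it would need regularity of densities that is not available.

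Your induction-free outline can nevertheless be completed, because Step~2 follows directly from Lemma~\ref{rootchange}. Note that this lemma is an exact identity in the coordinates $\Psi^{(r)}$ for \emph{adjacent} roots, with no normalization needed. For a general $r$ with $u\to w$, $\Psi^{(w)}$ and $\Psi^{(r)}$ differ along the whole path from $r$ to $w$, so your root-switch statement must be restricted to $r=u$.

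For a non-leaf $v$ and a component $B$ of $G_{V\setminus\{v\}}$, pick a leaf $\ell$ of $G$ outside $B$; every other component contains one. Then $\Psi^{(v)}(K)|_B=\Psi^{(\ell)}(K)|_B$. The successive root changes along the path from $\ell$ to $v$ act only on coordinates in $V\setminus B$, so $\Psi^{(v)}(K)|_{V\setminus B}$ is a function of $\Psi^{(\ell)}(K)|_{V\setminus B}$. Hence $\Psi^{(v)}(K)|_B$ has independent components and is independent of $\Psi^{(v)}(K)|_{V\setminus B}$. Doing this for every $B$ gives full independence.

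Non-Diracness of $\Psi^{(v)}_v(K)$ follows by walking from a leaf. For adjacent $r,s$ one has $\Psi^{(s)}_s=H^{(1)}_{q_{r,s}}(\Psi^{(r)}_r,\Psi^{(r)}_s)$, where $H^{(1)}_q$ is the first coordinate of $H_q$. The map $x\mapsto H^{(1)}_q(x,y)$ is strictly monotone when $q\neq1$, and the same holds for $h_q$.

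You then need to apply the bivariate characterization on \emph{every} edge, since a single leaf-to-leaf path does not cover a general tree. Match $\beta$ and $a_u$ through the laws $\BetaI(a_u,\beta)$ (resp.\ $\BetaII(a_u,\beta)$) of $\Psi^{(u)}_u(K)$, and use $\Psi^{(r)}_v=\Psi^{(v_r)}_v$ to identify the non-root marginals; this yields the product law of Theorem~\ref{direct}. That would be a genuinely different, induction-free proof which also avoids the check $\check K\in\mathcal D_{G'}$. As written, however, your proposal leaves this step open.
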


Note that, when $q_e=0$ for all $e\in E$, the above Theorem \ref{Charact} extends Theorem 6.6 of \cite{DKWZ25} in case the decomposable graph $G$ is a tree, since in the latter results independence properties with respect to all moral DAGs were assumed: For example, in case of the chain $G=(V,E)$ with $V=\{1,\dots,d\}$ and $E=\{\{j,j+1\},\;j=1,\dots,d-1\}$ Theorem \ref{Charact} to characterize $\mathrm{TB}^G_{\mathrm I}$ (resp. $\mathrm{TB}^{G}_{\mathrm{II}})$ law needs the independences of components of just two random vectors $\Psi^{(1)}(K)$ and $\Psi^{(d)}(K)$ (resp. $\psi^{(1)}(K)$ and $\psi^{(d)}(K)$), while in Theorem 6.6, the independences of components of additional $d-2$ random vectors $\Psi^{(2)}(K),\dots,\Psi^{(d-1)}(K)$ (resp. $\psi^{(2)}(K),\dots,\psi^{(d-1)}(K)$) are needed. Furthermore, Theorem \ref{Charact} is a natural analogue of  Theorem 4.1 of \cite{MasWes2004} for the MY tree model and of  Theorem 4 of \cite{PilWes16} for the HV tree model.

\subsection{Future projects} The tree models already known in the literature and the two we propose and study in this paper, all are multivariate versions of the bivariate IP models governed by subtraction free $[2:2]$ quadrirational Yang-Baxter maps, see \cite{SasUoz2024}. It would be plausible to use this fact for designing a common Yang-Baxter-related setup, which would cover all these cases. Moreover, these are the only known tree-related IP models. The natural question is if these are the only possible ones or there exist more. How to construct such models? That is,  multivariate random vectors with coordinates assigned to vertices of a tree $G$, together with IG maps specific for each rooted version of $G$. Until now all such models were constructed on the case-by-case basis. To search for a general scheme, the notion of the web geometry, whose relation to bivariate IP maps was recently found in \cite{Gyo2026}, looks very promising. We are planning to pursue this relation for tree models as a future project.

It is also natural to search for matrix variate versions of these tree models. At the moment a matrix version was designed only for the MY tree model in \cite{Bob2015}. There exists a matrix version of HV model but only in the bivariate case - see \cite{KolPil2020}, as well as, slightly different, in \cite{Kou2012}. In the bivariate case the IP properties of the  matrix first kind beta distributions were also studied by Olkin and Rubin, \cite{OlkRub1964}, and more recently, in \cite{HasReg2009} and \cite{Kol2016}.

For the MY tree model there exists also the Brownian motion hitting times representation, see \cite{WesWit2007}, as well as a more general representation through multivariate conditional structure of functionals of exponential Brownian motion, see \cite{MWW2009}. Existence of representations of such type, i.e. related to properties of stochastic processes, for other  tree models, remains an open problem.       

Another question worth to investigate is the relation between tree-beta models and tree-MY and tree-HV ones. In the bivariate setting the beta model, which is related to $H_I^{+}$, stands at the top of the chierarchy of quadrirational Yang-Baxter IP maps, and both $H_{II}^+$ and $H_{III}$ models can be derived by some limiting procedures from the beta one, see \cite{SasUoz2024}. In the multivariate setting we are able to follow this path only to the tree-HV model, see Section \ref{sec:limits}. Existence of such a  limiting procedure which would lead from tree-beta to tree-MY model is another open problem in this area. 

\section{Graph polynomials $\Delta_G$, $\delta_G$ and maps $\Psi^{(r)}$, $\psi^{(r)}$} \label{sec:ip-map}
 
Let us start with a trivial, but useful observation. For an undirected graph $G=(V,E)$ with edge weights $q \in \mathbb{R}^E$ and every $U\subset V$,
\begin{equation}\label{delta}
\Delta_U(k)=\delta_U(-k),\quad k\in \mathbb R^V.
\end{equation}

Actually, \eqref{delta} allows to relate $\Psi^{(r)}$ and $\psi^{(r)}$.
\begin{lemma}\label{lem:ppsi} Suppose $q\in(-\infty,1)^E$. Then, for an arbitrary $r\in V$, 
\begin{equation}\label{ppsi}
\Psi^{(r)}(-k)=-\psi^{(r)}(k),\quad k\in (-\mathcal D),
\end{equation}
where $\psi^{(r)}(k)$ is defined as in \eqref{psi} but for $k\in (-\mathcal D)$ and  $q\in(-\infty,1)^E$.
\end{lemma}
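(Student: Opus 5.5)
This is a direct computation from the identity \eqref{delta}, applied to each induced subgraph, followed by substitution into the definitions \eqref{Psi} and \eqref{psi}. Fix $r\in V$ and $k\in(-\mathcal D)$, so that $-k\in\mathcal D$ and $\Psi^{(r)}(-k)$ is defined by \eqref{Psi}.

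First, I would note that \eqref{delta} holds for every $U\subseteq V$, including $U=\varnothing$, where both sides equal $1$. The reason is that the sums defining $\Delta_U$ and $\delta_U$ run over the same subsets $W\subseteq U$ with the same edge weights $q^{E_W}$. The only difference is the sign: $(-1)^{|W|}k^W$ versus $(-k)^W$, and these are equal.

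Applying this with $U=\debar^{(r)}(v)$ and with $U=\de^{(r)}(v)$ gives
\[
\Delta_{\debar^{(r)}(v)}(-k)=\delta_{\debar^{(r)}(v)}(k),\qquad \Delta_{\de^{(r)}(v)}(-k)=\delta_{\de^{(r)}(v)}(k).
\]
Since $-k\in\mathcal D$, the definition \eqref{DG} makes $\Delta_U(-k)>0$ for all $U$. Hence $\delta_{\de^{(r)}(v)}(k)>0$, so the ratio defining $\psi^{(r)}_v(k)$ in \eqref{psi} makes sense for $k\in(-\mathcal D)$ even though $q$ may now be negative. This is exactly the extended meaning of $\psi^{(r)}$ in the statement.

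Substituting into \eqref{Psi} yields, for each $v\in V$,
\[
\Psi^{(r)}_v(-k)=1-\tfrac{\delta_{\debar^{(r)}(v)}(k)}{\delta_{\de^{(r)}(v)}(k)}=-\psi^{(r)}_v(k).
\]
Collecting over $v\in V$ gives the claimed identity.

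No step is a real obstacle. The only points needing care are the well-definedness of the denominators, which the previous paragraph handles, and the convention $\Delta_\varnothing=\delta_\varnothing=1$ for leaves $v\neq r$, where $\de^{(r)}(v)=\varnothing$.
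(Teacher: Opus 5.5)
Your proposal is correct and follows essentially the same route as the paper's proof: use $\delta_U(k)=\Delta_U(-k)>0$ for $-k\in\mathcal D$ to justify the extended definition of $\psi^{(r)}$, then substitute into \eqref{Psi} to get $\Psi^{(r)}_v(-k)=-\psi^{(r)}_v(k)$. Your added remark about the empty-set convention for leaves is a harmless extra detail.
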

\begin{proof}
    Since $-k\in\mathcal D$ for any $U\subset V$ we have $\delta_U(k)=\Delta_U(-k)>0$ and thus the right-hand side of \eqref{psi} applies as an extended definition of $\psi^{(r)}$.  Thus, following \eqref{psi} we get 
    \[
    -\psi^{(r)}_v(k)=-\left(\tfrac{\delta_{\debar^{(r)}(v)}(k)}{\delta_{\de^{(r)}(v)}(k)}-1\right)=1-\tfrac{\Delta_{\debar^{(r)}(v)}(-k)}{\Delta_{\de^{(r)}(v)}(-k)}=\Psi^{(r)}_v(-k).
    \]
\end{proof}

\bigskip
Next, we derive a multiplicative property of polynomials $\Delta_G$ and $\delta_G$ which is a direct generalization of point (2) of Lemma 2.4 in \cite{DKWZ25}.
\begin{lemma}
	\label{product}
	Let $G=(V,E)$ be an undirected graph. If $A,B\subset V$ are disjoint and disconnected in $G$, i.e. there are no edges in $E$ with one endpoint in $A$ and the other in $B$, then
    \[
    \Delta_{A\cup B}=\Delta_A\,\Delta_B\quad \text{and}\quad \delta_{A\cup B}=\delta_A\,\delta_B.
    \]
\end{lemma}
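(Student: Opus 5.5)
The identity is checked termwise, using only Definition \ref{def:polynomial-pG}. By \eqref{delta} it suffices to prove the statement for $\delta$: once $\delta_{A\cup B}(k)=\delta_A(k)\,\delta_B(k)$ holds for all $k\in\R^V$, substituting $-k$ gives $\Delta_{A\cup B}(k)=\delta_{A\cup B}(-k)=\delta_A(-k)\delta_B(-k)=\Delta_A(k)\Delta_B(k)$. Alternatively, one can run the argument below directly with the sign $(-1)^{|U|}$ carried along, since that sign is multiplicative in the same way.

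\textbf{Step 1 (bijection of index sets).} Since $A\cap B=\varnothing$, the map $(U_1,U_2)\mapsto U_1\cup U_2$ is a bijection from $\{U_1\subseteq A\}\times\{U_2\subseteq B\}$ onto $\{U\subseteq A\cup B\}$. Its inverse is $U\mapsto(U\cap A,U\cap B)$.

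\textbf{Step 2 (factor each term).} Fix $U=U_1\cup U_2$ with $U_1\subseteq A$ and $U_2\subseteq B$.
\begin{itemize}
\item Disjointness gives $k^{U}=k^{U_1}k^{U_2}$ and $(-1)^{|U|}=(-1)^{|U_1|}(-1)^{|U_2|}$.
\item The key point is the edge set. An edge of $G_{A\cup B}$ with both ends in $U$ either has both ends in $U_1$, or both ends in $U_2$, or one end in each.
\item The mixed case is excluded, because $A$ and $B$ are disconnected in $G$.
\item Hence $E_U=E_{U_1}\sqcup E_{U_2}$, where $E_{U_i}$ is the same whether computed in $G$, $G_A$, $G_B$ or $G_{A\cup B}$, since induced subgraphs are nested.
\item Therefore $q^{E_U}=q^{E_{U_1}}q^{E_{U_2}}$.
\end{itemize}

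\textbf{Step 3 (sum).} Summing over the bijection of Step 1, the sum defining $\delta_{A\cup B}(k)=\sum_{U\subseteq A\cup B}q^{E_U}k^U$ becomes the double sum
\[
\sum_{U_1\subseteq A}\sum_{U_2\subseteq B}q^{E_{U_1}}k^{U_1}\,q^{E_{U_2}}k^{U_2},
\]
which factorizes as $\delta_A(k)\,\delta_B(k)$.

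There is no real obstacle. The only point needing care is the edge decomposition $E_U=E_{U_1}\sqcup E_{U_2}$, which is exactly where the disconnectedness hypothesis enters. The edge-weight conventions also need a brief check: the notation $\Delta_U(k)=\Delta_{G_U}(k|_U)$ uses the restricted weights $(q_e)_{e\in E_U}$, so that $q^{E_{U_1}}$ means the same thing in every subgraph.
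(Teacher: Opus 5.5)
Your proof is correct and follows the paper's argument essentially step for step. The paper likewise reduces to $\delta$ via $\Delta_U(k)=\delta_U(-k)$, splits each $U$ into $U\cap A$ and $U\cap B$, uses the absence of $A$--$B$ edges to get $E_U=E_{U\cap A}\cup E_{U\cap B}$ and $k^U=k^{U\cap A}k^{U\cap B}$, and factors the sum. Your remark about the restricted edge weights is handled in the paper by assuming $V=A\cup B$ without loss of generality.
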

\begin{proof}
In view of \eqref{delta} it suffices to prove only the second identity. Also, without loss of generality we can assume that $V=A\cup B$. 
	Then for any $U\subset V$ the sets $U\cap A$ and $U\cap B$ are disjoint and $E_U=E_{U\cap A}\cup E_{U\cap B}$. Clearly, $k^{U} = k^{U\cap A}k^{U\cap B}$ for any $k\in \mathbb R^V$. Therefore,
    \begin{align*}
    \delta_{A\cup B}(k)=&\delta_G(k)=\sum_{U\subset V}\,q^{E_U}\,k^U=\sum_{(U\cap A)\cup (U\cap B)\subset V}\,q^{E_{(U\cap A)\cup (U\cap B)}}\,k^{(U\cap A)\cup (U\cap B)}\\
    =&\sum_{\substack{U'\cup U''\subset V\\ U'\subset A,\,U''\subset B}}\,q^{E_{U'}}\,q^{E_{U''}}\,k^{U'}\,k^{U''}=\biggl(\sum_{U'\subset A}\,q^{E_{U'}}\,k^{U'}\biggr)\,\biggl(\sum_{U''\subset B}\,q^{E_{U''}}\,k^{U''}\biggr)=\delta_A(k)\,\delta_B(k).
    \end{align*}
\end{proof}

\bigskip
In the next result, we provide recursive representations of $\Psi^{(r)}$ (resp. $\psi^{(r)}$) in terms of $\Delta$-type (resp. $\delta$-type)  polynomials. This result extends the equivalence between Def. 2.7 and Lemma 2.9 (4) in \cite{DKWZ25} to the case where \(q_e \neq 0\), although this extension is restricted to trees (whereas the original result was established for decomposable graphs but only when \(q_e=0\) for all \(e \in E\)).
\begin{lemma}\label{pp_rep}
Fix an arbitrary root $r \in V$. Then for every $v\in V$
		\begin{equation}\label{Psi_rep}
        \Psi_{v}^{(r)}(k)
		=
		k_v \prod_{w \in \ch^{(r)}(v)} \tfrac{1 - q_{w,v} \Psi_{w}^{(r)}(k)}{1 - \Psi_{w}^{(r)}(k)},\quad k\in\mathcal D,\; q\in(-\infty,1)^E;
        \end{equation}
        and
		\begin{equation}\label{psi_rep}
		\psi_{v}^{(r)}(k)
		=
		k_{v} \prod_{w \in \ch^{(r)}(v)} \tfrac{1 + q_{w,v}\psi_{w}^{(r)}(k)}{1 + \psi_w^{(r)}(k)},\quad \begin{cases} k\in(0,\infty)^V,\,& q\in[0,\infty)^E,\\
   k\in(-\mathcal D),\,& q\in(-\infty,1)^E. 
    \end{cases}
	\end{equation}
\end{lemma}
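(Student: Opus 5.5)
By \eqref{delta} and Lemma \ref{lem:ppsi}, \eqref{Psi_rep} follows from \eqref{psi_rep} on $-\mathcal D$: replace $k$ by $-k$ and use $\Psi^{(r)}(-k)=-\psi^{(r)}(k)$. So I will prove \eqref{psi_rep} as an identity of rational functions in $k$, valid wherever all the $\delta$'s involved are nonzero. That covers both cases, since $\delta_U>0$ on $(0,\infty)^V$ for $q\ge 0$, and $\delta_U(k)=\Delta_U(-k)>0$ on $-\mathcal D$.

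The key step is a first-step (deletion) expansion of $\delta$ on the subtree $\debar^{(r)}(v)$. Write $T=\debar^{(r)}(v)$ and split the subsets $U\subseteq T$ according to whether $v\in U$.
\begin{itemize}
\item If $v\notin U$, then $U\subseteq\de^{(r)}(v)=\bigsqcup_{w\in\ch^{(r)}(v)}\debar^{(r)}(w)$. These subtrees are pairwise disconnected, so by Lemma \ref{product} this part of the sum equals
\[
\delta_{\de^{(r)}(v)}=\prod_{w}\delta_{\debar^{(r)}(w)}.
\]
\item If $v\in U$, write $U=\{v\}\cup\bigcup_w U_w$ with $U_w\subseteq\debar^{(r)}(w)$. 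The edges of $U$ are those inside each $U_w$, together with the edge $\{v,w\}$ exactly when $w\in U_w$.
\end{itemize}
Summing the second part factor by factor gives
\[
k_v\prod_{w}\bigl(\delta_{\de^{(r)}(w)}+q_{w,v}(\delta_{\debar^{(r)}(w)}-\delta_{\de^{(r)}(w)})\bigr).
\]
Here $\delta_{\debar^{(r)}(w)}-\delta_{\de^{(r)}(w)}$ is the sum over subsets containing $w$, which picks up the extra factor $q_{w,v}$; the subsets not containing $w$ contribute $\delta_{\de^{(r)}(w)}$.

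Dividing the full expansion of $\delta_T$ by $\delta_{\de^{(r)}(v)}=\prod_w\delta_{\debar^{(r)}(w)}$ and subtracting $1$ gives
\[
\psi^{(r)}_v=k_v\prod_w\frac{\delta_{\de^{(r)}(w)}+q_{w,v}(\delta_{\debar^{(r)}(w)}-\delta_{\de^{(r)}(w)})}{\delta_{\debar^{(r)}(w)}}.
\]
By definition \eqref{psi}, $\delta_{\debar^{(r)}(w)}/\delta_{\de^{(r)}(w)}=1+\psi^{(r)}_w$. Dividing numerator and denominator by $\delta_{\de^{(r)}(w)}$ turns each factor into
\[
\frac{1+q_{w,v}\psi^{(r)}_w}{1+\psi^{(r)}_w},
\]
which is \eqref{psi_rep}. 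For a leaf, $\de^{(r)}(v)=\varnothing$ and $\delta_\varnothing=1$; the formula then reads $\psi_v=(1+k_v)-1=k_v$, consistent with the empty product.

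The main obstacle is the bookkeeping of edges in the case $v\in U$. One must check that the only edges of $G_U$ not internal to some $U_w$ are the edges $\{v,w\}$ with $w\in U$. This holds because $G$ is a tree, so $v$'s only neighbours in $T$ are its children and the subtrees of different children are not adjacent. One must also note that nonvanishing of the denominators is guaranteed in each stated domain.
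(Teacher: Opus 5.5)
Your proposal is correct and follows essentially the same route as the paper. You split $\delta_{\debar^{(r)}(v)}$ according to whether $v\in U$, and factor the $v\in U$ part over the children to get $k_v\prod_{w}\bigl(\delta_{\de^{(r)}(w)}+q_{w,v}(\delta_{\debar^{(r)}(w)}-\delta_{\de^{(r)}(w)})\bigr)$. You then divide by $\delta_{\de^{(r)}(v)}=\prod_w\delta_{\debar^{(r)}(w)}$ and deduce \eqref{Psi_rep} from \eqref{psi_rep} via Lemma \ref{lem:ppsi}, exactly as the paper does; your explicit check that the denominators are positive on both domains is a small addition the paper leaves implicit.
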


Note that in the above lemma the quantities $\Psi_{v}^{(r)}(k)$ (resp. $\psi_{v}^{(r)}(k)$) satisfy a recurrence which goes from the leaves (with no children in $G^{(r)}$) towards the root; in particular, for any leaf $v\in L_G\setminus \{r\}$, the empty product gives $\Psi_{v}^{(r)}(k) = k_{v}$ and $\psi_{v}^{(r)}(k) = k_{v}$. Consequently, \eqref{Psi_rep} (resp. \eqref{psi_rep})  uniquely determine $\Psi^{(r)}$ (resp. $\psi^{(r)}$). Therefore both expression may serve as definitions of these maps. Note the analogy with recursive definitions of maps $\Psi^{(r)}$ for the tree models of type MY \eqref{PsiMY} and type HV \eqref{PsiHV}.

\begin{proof}
    We first prove  \eqref{psi_rep}. We assume that $k$ and $q$ are as specified in \eqref{psi_rep}. 
    
	By definition,
	\begin{align}
		\delta_{\debar^{(r)}(v)}(k) - \delta_{\de^{(r)}(v)}(k)
		= \sum_{v\in U \subseteq \debar^{(r)}(v)} q^{E_U}k^{U}.
	\end{align}
	Re-index the sum by writing $U = U' \cup \{v\}$ with $U' \subseteq \de^{(r)}(v)$.
	For such $U'$, only the variables corresponding to $\ch^{(r)}(v)$ are affected by the edges incident to $v$.
	Consequently,
	\begin{align}
		\delta_{\debar^{(r)}(v)}(k) - \delta_{\de^{(r)}(v)}(k)
		&= k_{v} \sum_{U' \subseteq \de^{(r)}(v)}
		\bigg(\prod_{w \in U' \cap \ch^{(r)}(v)}q_{v,w} \bigg) q^{E(U')}  k^{U'}.
	\end{align}
    Since $\de^{(r)}(v)$ is a  union of disjoint sets $\debar^{(r)}(w)$, $w\in \ch^{(r)}(v)$,
    \begin{align}
		\delta_{\debar^{(r)}(v)}(k) - \delta_{\de^{(r)}(v)}(k)
		&= k_{v} \prod_{w \in \ch^{(r)}(v)} \bigg( \sum_{U \subseteq \debar^{(r)}(w)} q_{v,w}^{\1{w\in U}} q^{E(U)}  k^{U} \bigg)
        \\
        &= 
        k_{v} \prod_{w \in \ch^{(r)}(v)} \Bigg( 
        \sum_{U \subseteq \de^{(r)}(w)} q^{E(U)}  k^{U}
        + q_{v,w} \sum_{w\in U \subseteq \debar^{(r)}(w)} q^{E(U)}  k^{U} 
        \Bigg)
        \\
        &= 
        k_{v} \prod_{w \in \ch^{(r)}(v)} \Big( 
        \delta_{\de^{(r)}(w)}(k)
        + q_{v,w} \big(\delta_{\debar^{(r)}(w)}(k) - \delta_{\de^{(r)}(w)}(k)\big)
        \Big).\label{identity}
	\end{align}
    
    Therefore, referring to  \eqref{psi}, and to the above identity, we see that the right-hand side of \eqref{psi_rep} can be written as
    \begin{align}
        k_v \prod_{w \in \ch^{(r)}(v)} \tfrac{
        1+ q_{v,w} \bigg(\frac{\delta_{\debar^{(r)}(w)}(k)}{\delta_{\de^{(r)}(w)}(k)} - 1 \bigg)
        }{
        1+ \bigg(\frac{\delta_{\debar^{(r)}(w)}(k)}{\delta_{\de^{(r)}(w)}(k)} - 1 \bigg)
        }
        &=
        k_v \prod_{w \in \ch^{(r)}(v)} 
        \tfrac{\delta_{\de^{(r)}(w)}(k) + q_{v,w} \Big(\delta_{\debar^{(r)}(w)}(k) - \delta_{\de^{(r)}(w)}(k) \Big)}{\delta_{\debar^{(r)}(w)}(k)}
        \\
        &= \tfrac{\delta_{\debar^{(r)}(v)}(k) - \delta_{\de^{(r)}(v)}(k)}{\delta_{\de^{(r)}(v)}(k)}=\psi^{(r)}_v(k).
    \end{align}

    To prove \eqref{Psi_rep} we consider $k\in\mathcal D$ and $q\in(-\infty,1)^E$  and rely on  \eqref{psi_rep} together with  \eqref{ppsi}. For arbitrary $r,v\in V$ we get
    \[
    \Psi^{(r)}_v(k)\stackrel{\eqref{ppsi}}{=}-\psi^{(r)}_v(-k)\stackrel{\eqref{psi_rep}}{=}-(-k_v)\prod_{w \in \ch^{(r)}(v)} \tfrac{1 + q_{w,v}\psi_{w}^{(r)}(-k)}{1 + \psi_w^{(r)}(-k)}\stackrel{\eqref{ppsi}}{=}k_v\prod_{w \in \ch^{(r)}(v)} \tfrac{1 - q_{w,v}\Psi_{w}^{(r)}(k)}{1 - \Psi_w^{(r)}(k)}.
    \]
\end{proof}

It appears that, similarly as in Lemma 2.9 (2) in \cite{DKWZ25}, polynomials $\Delta_G$ (resp. $\delta_G$) decompose nicely in terms of maps $\Psi^{(r)}$ (resp. $\psi^{(r)}$).
\begin{lemma}
	\label{lem:factorisation-skeleton}
	For any $r \in V$
	\begin{align}
    \label{Dfac}
		\Delta_{G}(k) &= \prod_{v \in V} \paren{1 - \Psi_{v}^{(r)}(k)},\quad k\in\mathcal D,\;q\in(-\infty,1)^E\\
        \delta_{G}(k) &= \prod_{v \in V} \paren{1 + \psi_{v}^{(r)}(k)},\quad \begin{cases} k\in(0,\infty)^V, & q\in[0,\infty)^E,\\
                k\in(-\mathcal D), & q\in(-\infty,1)^E.
            \end{cases}\label{dfac}
	\end{align}
\end{lemma}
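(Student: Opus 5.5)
The plan is to prove the $\delta_G$ identity \eqref{dfac} first and then obtain \eqref{Dfac} from it via \eqref{delta} and Lemma~\ref{lem:ppsi}, exactly as was done in the proof of Lemma~\ref{pp_rep}.

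The starting observation is that \eqref{psi} gives, for every $v\in V$,
\[
1+\psi^{(r)}_v(k)=\tfrac{\delta_{\debar^{(r)}(v)}(k)}{\delta_{\de^{(r)}(v)}(k)}.
\]
Both denominators are positive in either setting of \eqref{dfac}. For $k\in(0,\infty)^V$ and $q\ge 0$ this is clear, since every term is nonnegative and the empty set contributes $1$. For $k\in(-\mathcal D)$ it holds because $\delta_U(k)=\Delta_U(-k)>0$.

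Next, $\de^{(r)}(v)$ is the disjoint union of the sets $\debar^{(r)}(w)$, $w\in\ch^{(r)}(v)$, and these sets are pairwise disconnected in $G$. Any edge between two of them would create a cycle through $v$ in the tree. Lemma~\ref{product} therefore gives
\[
\delta_{\de^{(r)}(v)}=\prod_{w\in\ch^{(r)}(v)}\delta_{\debar^{(r)}(w)},
\]
and hence
\[
\prod_{v\in V}\bigl(1+\psi^{(r)}_v(k)\bigr)=\prod_{v\in V}\tfrac{\delta_{\debar^{(r)}(v)}(k)}{\prod_{w\in\ch^{(r)}(v)}\delta_{\debar^{(r)}(w)}(k)}.
\]

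We then telescope. Every $w\neq r$ has exactly one parent, so each factor $\delta_{\debar^{(r)}(w)}$ with $w\ne r$ appears once in the numerator and once in the denominator. The root never appears in a denominator. The whole product therefore collapses to $\delta_{\debar^{(r)}(r)}(k)=\delta_V(k)=\delta_G(k)$. Leaves cause no trouble: for a leaf $v\neq r$ we have $\de^{(r)}(v)=\varnothing$, so $\delta_\varnothing=1$ and $1+\psi_v^{(r)}=1+k_v$, which is consistent with the above.

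For \eqref{Dfac}, take $k\in\mathcal D$. By \eqref{delta} and \eqref{ppsi},
\[
\Delta_G(k)=\delta_G(-k)=\prod_{v}\bigl(1+\psi^{(r)}_v(-k)\bigr)=\prod_v\bigl(1-\Psi^{(r)}_v(k)\bigr).
\]
This uses the second case of \eqref{dfac}, which is why that case was included. I do not expect any real obstacle. The only point needing care is the disconnectedness of the child subtrees, which is what makes Lemma~\ref{product} applicable and the telescoping valid.
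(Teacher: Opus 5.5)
Your proof is correct and follows essentially the paper's route: the paper proves $\delta_{\debar^{(r)}(v)}(k)=\prod_{u\in\debar^{(r)}(v)}(1+\psi^{(r)}_u(k))$ by induction from the leaves to the root, using $\delta_{\debar^{(r)}(v)}=(1+\psi^{(r)}_v)\,\delta_{\de^{(r)}(v)}$ and Lemma~\ref{product} on the child subtrees. That induction is exactly your telescoping product written step by step, and the paper then deduces \eqref{Dfac} from \eqref{delta} and \eqref{ppsi} just as you do; your explicit check that the denominators are positive in both settings is a detail the paper leaves implicit.
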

\begin{remark}
    For any $U\subset V$, the same factorization applies to the induced subgraph $G_U$, equipped with the orientation inherited from $G^{(r)}$. If $G_U$ is a forest, the factorization is applied separately to its each tree and the resulting expressions are multiplied in view of  Lemma \ref{product}.
\end{remark} 

\begin{proof} It suffices to prove \eqref{dfac}. Indeed, then \eqref{Dfac} follows immediately from \eqref{delta} combined with \eqref{ppsi}.

    We will prove that for $k$ and $q$ as specified in \eqref{dfac} and for arbitrary $v\in V$ 
    \begin{align}
        \label{eq:factorisation-debar}
		\delta_{\debar^{(r)}(v)}(k) = \prod_{u \in \debar^{(r)}(v)} \paren{1 + \psi_{u}^{(r)}(k)}.
	\end{align}
    Then \eqref{dfac} would follow by taking $v=r$. 
    
    The proof of \eqref{eq:factorisation-debar} proceeds by induction on vertices from leaves to the root: if \eqref{eq:factorisation-debar} holds for all the children of a vertex, then so it does for that vertex. To start the induction we note that for a leaf $v\in L_G\setminus\{r\}$, the factorization \eqref{eq:factorisation-debar} holds since $\debar^{(r)}(v)=\{v\}$ and $\psi_{v}^{(r)}(k) = k_v$.

    For a general vertex $v$, by \eqref{psi} one has
	\begin{align}
		\delta_{\debar^{(r)}(v)}(k) 
        = (1 + \psi^{(r)}_v(k)) \, \delta_{\de^{(r)}(v)}(k).
	\end{align}
	Note that $\de^{(r)}(v)$ is the union of disjoint  sets  $\debar^{(r)}(w)$ over $w \in \ch^{(r)}(v)$, which are mutually disconnected in $G$ as there are no edges in $G$ connecting $\debar^{(r)}(w)$ and $\debar^{(r)}(w')$ for $w \neq w'$. 
	Therefore, Lemma \ref{product} yields
	\begin{equation*}
		\delta_{\de^{(r)}(v)}(k) = \prod_{w \in \ch^{(r)}(v)} \,\delta_{\debar^{(r)}(w)}(k).
	\end{equation*}
	The induction hypothesis is that \eqref{eq:factorisation-debar} holds for all $w\in\mathfrak{ch}^{(r)}(v)$, whence 
	\begin{align}
		\delta_{\de^{(r)}(v)}(k) 
        = \prod_{w \in \ch^{(r)}(v)} \prod_{u \in \debar^{(r)}(w)} \paren{1 + \psi^{(r)}_u(k)}
        = \prod_{u \in \de^{(r)}(v)} \paren{1 + \psi^{(r)}_u(k)}.
	\end{align}
	Multiplying by the factor $(1 + \psi^{(r)}_v(k))$ proves \eqref{eq:factorisation-debar}.
\end{proof}

Now we are ready to prove that $\Psi^{(r)}$ and $\psi^{(r)}$ are bijections with domains and images as specified in Definition \ref{maps}. 

\begin{lemma}\label{well_def}
    Fix an arbitrary $r\in V$. The maps $\Psi^{(r)}$ and $\psi^{(r)}$ as given in \eqref{Psi} and \eqref{psi} are  bijections with inverse functions $\Phi^{(r)}=(\Phi^{(r)}_v)_{v\in V}$ and $\phi^{(r)}=(\phi^{(r)}_v)_{v\in V}$ given by
    \begin{equation}\label{Phi}
    \Phi^{(r)}_v(x)=x_{v} \prod_{w \in \ch^{(r)}(v)} \frac{1 - x_w}{1 - q_{w,v}x_w},\quad x\in(0,1)^V,\;v\in V,\;\text{when }q\in(-\infty,1)^E,
    \end{equation}
    and 
    \begin{equation}\label{phi}
    \phi^{(r)}_v(x)=x_{v} \prod_{w \in \ch^{(r)}(v)} \frac{1 + x_w}{1 + q_{w,v}x_w},\quad  x\in(0,\infty)^V,\;v\in V,\,\text{when }q\in[0,\infty)^E.
    \end{equation}
    
    For an arbitrary $r\in V$
    \begin{equation}\label{pphi}
    \Phi^{(r)}(-x)=-\phi^{(r)}(x),\quad x\in (-1,0)^V,
    \end{equation}
    where $\phi^{(r)}_v(x)$, $v\in V$, is defined as in \eqref{phi} but for $x\in(-1,0)^V$ and $q\in(-\infty,1)^E$.
    \end{lemma}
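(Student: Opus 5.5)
The plan is to prove the statement for $\psi^{(r)}$ and $\phi^{(r)}$ first and then transfer it to $\Psi^{(r)}$ and $\Phi^{(r)}$ through the sign-flip identities \eqref{ppsi} and \eqref{pphi}.

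\emph{Step 1: the identity \eqref{pphi}.} This is immediate from the definitions. For $x\in(-1,0)^V$ we have
\[
-\phi^{(r)}_v(x)=(-x_v)\prod_{w\in\ch^{(r)}(v)}\frac{1-(-x_w)}{1-q_{w,v}(-x_w)}=\Phi^{(r)}_v(-x).
\]
All denominators are nonzero, because $q_{w,v}<1$ and $x_w\in(-1,0)$ give $1+q_{w,v}x_w>0$.

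\emph{Step 2: $\psi^{(r)}$ maps into the claimed set and has $\phi^{(r)}$ as left inverse.} Take $q\in[0,\infty)^E$ and $k\in(0,\infty)^V$. Every $\delta_U(k)$ has nonnegative terms and constant term $1$, and $\delta_{\debar^{(r)}(v)}(k)-\delta_{\de^{(r)}(v)}(k)$ contains the term $k_v>0$. Hence $\psi^{(r)}_v(k)>0$, so $\psi^{(r)}$ maps into $(0,\infty)^V$. Next, solve the recursion \eqref{psi_rep} of Lemma \ref{pp_rep} for $k_v$:
\[
k_v=\psi^{(r)}_v(k)\prod_{w\in\ch^{(r)}(v)}\frac{1+\psi^{(r)}_w(k)}{1+q_{w,v}\psi^{(r)}_w(k)},
\]
which is exactly $\phi^{(r)}_v(\psi^{(r)}(k))$. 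Thus $\phi^{(r)}\circ\psi^{(r)}=\mathrm{id}$ on $(0,\infty)^V$.

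\emph{Step 3: $\phi^{(r)}$ is also a right inverse.} Given $x\in(0,\infty)^V$, set $k=\phi^{(r)}(x)$; each factor in the product is positive, so $k\in(0,\infty)^V$. Show that $\psi^{(r)}_v(k)=x_v$ by induction from the leaves toward the root. At a leaf $v\neq r$ both sides equal $x_v$. At an interior vertex $v$, the induction hypothesis gives $\psi^{(r)}_w(k)=x_w$ for all children $w$, so by \eqref{psi_rep}
\[
\psi^{(r)}_v(k)=k_v\prod_{w\in\ch^{(r)}(v)}\frac{1+q_{w,v}x_w}{1+x_w}=x_v.
\]
Hence $\psi^{(r)}$ is a bijection of $(0,\infty)^V$ onto itself with inverse $\phi^{(r)}$.

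\emph{Step 4: the first-kind case.} Take $q\in(-\infty,1)^E$. For $k\in\mathcal D$ we have $\Delta_U(k)>0$ for all $U$, so the ratio in \eqref{Psi} is positive and $\Psi^{(r)}_v(k)<1$. To get $\Psi^{(r)}_v(k)>0$, write $\Delta_{\debar^{(r)}(v)}-\Delta_{\de^{(r)}(v)}$ using \eqref{identity} with $k$ replaced by $-k$; this gives
\[
\Delta_{\debar^{(r)}(v)}(k)-\Delta_{\de^{(r)}(v)}(k)=-k_v\prod_{w\in\ch^{(r)}(v)}\Bigl(\Delta_{\de^{(r)}(w)}(k)+q_{v,w}\bigl(\Delta_{\debar^{(r)}(w)}(k)-\Delta_{\de^{(r)}(w)}(k)\bigr)\Bigr).
\]
Rewriting each factor as $(1-q_{v,w})\Delta_{\de^{(r)}(w)}(k)+q_{v,w}\Delta_{\debar^{(r)}(w)}(k)$, it is a combination of positive quantities. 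When $q_{v,w}\ge0$, both coefficients $1-q_{v,w}>0$ and $q_{v,w}\ge0$, so the factor is positive. When $q_{v,w}<0$, rewrite the factor as $\Delta_{\de^{(r)}(w)}(k)\bigl(1-q_{v,w}\Psi^{(r)}_w(k)\bigr)$ and use the inductive bound $\Psi^{(r)}_w(k)<1$, which makes $1-q_{v,w}\Psi^{(r)}_w(k)>0$. Either way the product is positive, so $\Psi^{(r)}_v(k)\in(0,1)$, and $\Phi^{(r)}\circ\Psi^{(r)}=\mathrm{id}$ follows from \eqref{Psi_rep} exactly as in Step 2.

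\emph{Surjectivity onto $(0,1)^V$ is the main obstacle.} For $x\in(0,1)^V$, put $k=\Phi^{(r)}(x)$. Each factor $\frac{1-x_w}{1-q_{w,v}x_w}$ lies in $(0,1)$ when $q_{w,v}<1$, so $k\in(0,1)^V$. What must be checked is $k\in\mathcal D$, i.e.\ $\Delta_U(k)>0$ for \emph{every} $U\subseteq V$, not only for the descendant sets. The plan is as follows.
\begin{enumerate}
\item First show, by the leaf-to-root induction of Step 3 applied to \eqref{Psi_rep}, that the ratios $1-\Delta_{\debar^{(r)}(v)}(k)/\Delta_{\de^{(r)}(v)}(k)$ equal $x_v$ and hence that $\Delta_{\debar^{(r)}(v)}(k)=\prod_{u\in\debar^{(r)}(v)}(1-x_u)>0$, as in Lemma \ref{lem:factorisation-skeleton}.
\item For a general $U$, by Lemma \ref{product} it suffices to treat connected $U$, so $G_U$ is a subtree. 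Root $G_U$ at its vertex closest to $r$; then the orientation inherited from $G^{(r)}$ agrees with $G_U^{(u_0)}$.
\item Show that restricting to $U$ preserves positivity. Precisely, with $\Psi^{U}$ the map of the subtree evaluated at $k|_U$, show $0<\Psi^{U}_v(k|_U)\le\Psi^{(r)}_v(k)<1$ by induction. This uses that removing children $w$ removes factors $\frac{1-q x_w}{1-x_w}\ge1$ from \eqref{Psi_rep}, together with monotonicity of $t\mapsto\frac{1-qt}{1-t}$ (increasing on $(0,1)$ for $q<1$).
\item Then $\Delta_U(k)=\prod_{v\in U}(1-\Psi^U_v(k|_U))>0$ by Lemma \ref{lem:factorisation-skeleton} applied to $G_U$.
\end{enumerate}
This monotonicity comparison is the delicate point of the whole argument.
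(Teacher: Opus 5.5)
Your route is the paper's: the same left/right-inverse argument for $\psi^{(r)}$ and $\phi^{(r)}$, and, for surjectivity of $\Psi^{(r)}$, the same recursion on $G_U$ with the orientation inherited from $G^{(r)}$, controlled by the increasing factor $R_q(t)=\tfrac{1-qt}{1-t}>1$ on $(0,1)$. Two steps, however, do not work as written.

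The first is a slip in Step 4 when $q_{v,w}<0$. There the bound $\Psi^{(r)}_w(k)<1$ gives nothing, because $1-q_{v,w}\Psi^{(r)}_w(k)=1+|q_{v,w}|\,\Psi^{(r)}_w(k)$. What you need is $\Psi^{(r)}_w(k)>0$, which is the genuinely inductive half of the claim; the upper bound $<1$ holds directly on $\mathcal D$. Equivalently, on $\mathcal D$ you can use \eqref{Psi_rep} directly. Its factors $R_{q_{w,v}}(\Psi^{(r)}_w(k))$ are positive once $\Psi^{(r)}_w(k)\in(0,1)$, so no case split is needed.

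The second, more substantive, is a circularity in items 1 and 4 of your surjectivity plan. Lemma \ref{pp_rep} and Lemma \ref{lem:factorisation-skeleton} are established for arguments in $\mathcal D$. Applying them to $G_U$ at $k|_U$, with $k=\Phi^{(r)}(x)$, presupposes $k|_U\in\mathcal D_{G_U}$, which is exactly what you are trying to prove. Moreover, $\Psi^U$ defined by the ratio formula \eqref{Psi} is not even known to be well defined, since its denominators might vanish.

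The missing step is the bootstrap the paper performs:
\begin{enumerate}
\item Define $y^U_v$ purely by the recursion on $G_U$, so that your item 3 gives $0<y^U_v\le x_v<1$ without reference to any $\Delta$.
\item On each component of $G_U$, prove by leaf-to-root induction that simultaneously $\Delta_{\de(v)}(k)>0$, $y^U_v=1-\Delta_{\debar(v)}(k)/\Delta_{\de(v)}(k)$, and hence $\Delta_{\debar(v)}(k)=\Delta_{\de(v)}(k)\,(1-y^U_v)>0$. Here $\de(v)$ and $\debar(v)$ refer to descendants within $G_U$. The tools are Lemma \ref{product} and the polynomial identity \eqref{identity} evaluated at $-k$, which is valid for every $k$ because its derivation is purely algebraic.
\end{enumerate}
The bound $y^U_v<1$ from item 3 is what keeps the denominators positive one level up. That is where the monotonicity comparison is actually used; on its own it does not yield $\Delta_U(k)>0$. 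Your item 1, done this way, is just the case $U=V$, with $x_v<1$ in place of $y^U_v<1$.
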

    \begin{proof}
Fix an arbitrary $r\in V$. 

To prove that $\psi^{(r)}$ is bijective with the inverse function $\phi^{(r)}$, first, we take $k\in(0,\infty)^V$. Then,  \eqref{psi_rep}  applied recursively starting  from the leaves, gives $\psi^{(r)}(k)\in(0,\infty)^V$. Furthermore, for any $v\in V$
\[
\phi^{(r)}_v(\psi^{(r)}(k))=\psi^{(r)}_v(k)\,\prod_{w\in\mathfrak{ch}^{(r)}(v)}\,\tfrac{1+\psi^{(r)}_w(k)}{1+q_{w,v}\psi^{(r)}_w(k)}=k_v,
\]
where we use \eqref{phi} and then \eqref{psi_rep}. Consequently, $\phi^{(r)}\circ \psi^{(r)}=\mathrm{id}_{(0,\infty)^V}$.

Next, take $x\in (0,\infty)^V$. Then, from \eqref{phi}, $k(x):=\phi^{(r)}(x)\in(0,\infty)^V$ and for any $v\in V$ 
    \begin{equation}\label{kx}
    k_v(x)=x_v\,\prod_{w\in\mathfrak{ch}^{(r)}(v)}\,\tfrac{1+x_w}{1+q_{w,v}x_w}.
    \end{equation}
   Now, we prove inductively, starting from $v\in L_G\setminus\{r\}$ (then $\mathfrak{ch}^{(r)}(v)=\varnothing$) and proceeding towards the root, that $\psi^{(r)}_v(k(x))=x_v$. When $v\in L_G\setminus\{r\}$, it is obvious. For $v\not\in L_G\setminus\{r\}$, by the induction assumption, $\psi_w^{(r)}(k(x))=x_w$ for every $w\in\mathfrak{ch}^{(r)}(v)$. Then  
   \[
   \psi^{(r)}_v(k(x))\overset{\eqref{psi_rep}}{=}k_v(x)\,\prod_{w\in \mathfrak{ch}^{(r)}(v)}\,\tfrac{1+q_{w,v}\psi_w^{(r)}(k(x))}{1+\psi_w^{(r)}(k(x))}=k_v(x)\,\prod_{w\in \mathfrak{ch}^{(r)}(v)}\,\tfrac{1+q_{w,v}x_w}{1+x_w}\overset{\eqref{kx}}{=}x_v.
   \]
   Consequently, $\psi^{(r)}\circ \phi^{(r)}=\mathrm{id}_{(0,\infty)^V}$.

    Thus, $\phi^{(r)}$ is the inverse function of $\psi^{(r)}$.

\bigskip
We prove that $\Psi^{(r)}$ is bijective with the inverse function $\Phi^{(r)}$, by repeating the above argument with obvious modifications: $\Psi^{(r)}$ in place of $\psi^{(r)}$, $\Phi^{(r)}$ in place of $\phi^{(r)}$, respective changes of domains and codomains and references to \eqref{Psi_rep} and \eqref{Phi} instead of \eqref{psi_rep} and \eqref{phi}. The only part which needs explanation is  that  $k(x):=\Phi^{(r)}(x)\in \mathcal D$ where
\begin{equation}\label{kvx}
k_v(x)=x_v\,\prod_{w\in\mathfrak{ch}^{(r)}(v)}\,\tfrac{1-x_w}{1-q_{w,v}x_w},\quad x\in(0,1)^V.
\end{equation}
Thus, it remains to prove that $\Delta_U(k(x))>0$ for every non-empty $U\subseteq V$ (note that $\Delta_\varnothing=1$).

\bigskip
To this end we fix an arbitrary non-empty $U\subseteq V$ and consider the induced forest $G_U$. Let $r\in V$, and equip $G_U$ with the orientation inherited from $G^{(r)}$. Thus, denoting by $\mathfrak{ch}(v)$ the set of children of $v$ in $G_U$ under this orientation, we have $\mathfrak{ch}(v)=\mathfrak{ch}^{(r)}(v)\cap U$ for any $v\in U$. For each $v\in U$, recursively define
\begin{equation}\label{eq:yH}
y_v^U
:=k_v(x)\prod_{w\in\ch(v)}
\frac{1-q_{w,v}y_w^U}{1-y_w^U}.
\end{equation}

\bigskip
We first prove by induction starting with leaves of $G_U$ with no children and moving up-arrows of directed $G_U$ that
\begin{equation}\label{eq:y-bound}
0<y_v^U\le x_v<1, \quad v\in U.
\end{equation}
If $v\in U$ is such that $\mathfrak{ch}(v)=\varnothing$ then $y_v^U=k_v(x)$. Since \eqref{kvx} yields $0<k_v(x)\le x_v$ the result follows. 

For the induction step we fix $v\in U$ such that $\mathfrak{ch}(v)\neq\varnothing$ and note that $0<y_w^U\le x_w$ for all $w\in \mathfrak{ch}(v)$. Then, using the function $R_q(t):=\frac{1-qt}{1-t}>1$, $0<t<1$, we can write
\begin{align*}
y_v^U=k_v(x)\prod_{w\in\ch(v)}\,R_{q_{w,v}}(y_w^U)\le k_v(x)\prod_{w\in\ch(v)}\,R_{q_{w,v}}(x_w)\le k_v(x)\prod_{w\in\ch^{(r)}(v)}\,R_{q_{w,v}}(x_w)=x_v,
\end{align*}
where the first inequality follows from the induction assumption and the fact that $R_{q_{w,v}}$ is increasing since $q_{w,v}<1$, while the second inequality uses $R_{q_{w,v}}(x_w)>1$, which follows from $q_{w,v}<1$ and $x_w>0$ (clearly, $\ch(v)\subset \ch^{(r)}(v)$). The first equality above yields $y_v^U>0$. Thus \eqref{eq:y-bound} holds.

\bigskip
In view of Lemma \ref{product}, to prove that $\Delta_U(k(x))>0$ it suffices to prove that $\Delta_C(k(x))>0$ for each connected component $G_C$ of $G_U$.  Fix such $C$ and consider a directed tree with skeleton $G_C$ (equipped with the orientation induced by $G^{(r)}$). For such a directed tree we prove inductively (moving up-arrow from the leaves of $G_C$)  that for each $v\in C$ the following inequalities hold: $\Delta_{\debar(v)}(k(x))>0$, $\Delta_{\de(v)}(k(x))>0$, and
\begin{equation}\label{eq:ratio-y}
y^U_v=1-\frac{\Delta_{\debar(v)}(k(x))}{\Delta_{\de(v)}(k(x))}
\end{equation}
where $\de(v)$ is the descendants of $v$ in $G_C$, and $\debar(v)$ is its closure.
If $\mathfrak{ch}(v)=\varnothing$ then $\Delta_{\de(v)}(k(x))=1$ and $\Delta_{\debar(v)}(k(x))=1-k_v(x)>0$, which also proves \eqref{eq:ratio-y}. Take now $v\in C$ such that $\mathfrak{ch}(v)\neq\varnothing$. Then by induction assumption $\Delta_{\debar(w)}(k(x))>0$ for each $w\in\mathfrak{ch}(v)$, which gives
$\Delta_{\de(v)}(k(x))=\prod_{w\in\mathfrak{ch}(v)}\,  \Delta_{\debar(w)}(k(x))>0$. Referring again to  the induction assumption and \eqref{eq:yH} we get
\begin{multline}
y_v^U=k_v(x)\prod_{w\in\mathfrak{ch}(v)}\,\tfrac{\Delta_{\de(w)}(k(x))}{\Delta_{\debar(w)}(k(x))}\,\left(1-q_{w,v}\left(1-\tfrac{\Delta_{\debar(w)}(k(x))}{\Delta_{\de(w)}(k(x))}\right)\right)\\
=\tfrac{k_v(x)}{\Delta_{\de(v)}(k(x))}\,\prod_{w\in\mathfrak{ch}(v)}\,\left(\Delta_{\de(w)}(k(x))(1-q_{w,v})+q_{w,v}\Delta_{\debar(w)}(k(x))\right)
=\tfrac{\Delta_{\de(v)}(k(x))-\Delta_{\debar(v)}(k(x))}{\Delta_{\de(v)}(k(x))},
\end{multline}
where in the last equality we used the representation \eqref{delta} and referred to the identity \eqref{identity}. So, \eqref{eq:ratio-y} holds true.
 
 Since $y_v^U<1$ by \eqref{eq:y-bound}, we finally have 
 \[
 \Delta_{\debar(v)}(k(x))=\Delta_{\de(v)}(k(x))(1-y_v^U)>0,\quad v\in C.
 \]
In particular, taking $a\in C$ such that $C=\debar(a)$ we get $\Delta_C(k(x))=\Delta_{\debar(a)}(k(x))>0$.

    \bigskip
To prove \eqref{pphi} take $r\in V$ and $x\in(-1,0)^V$. Then 
\[
\Phi^{(r)}_v(-x)=(-x_v)\,\prod_{w\in\mathfrak{ch}^{(r)}(v)}\,\tfrac{1+x_w}{1+q_{w,v}x_w}=-\phi^{(r)}_v(x),\quad v\in V.
\]
    \end{proof}

The factorisation in \ref{lem:factorisation-skeleton} and the recursive description in Lemma \ref{pp_rep} form the algebraic core of the IP property.
The next result records the effect of changing the root across a single edge.

\subsection{Changing the root across an edge} \label{subsec:root-shift}

If $r,s\in V$ are adjacent in $G$, then $G^{(r)}$ and $G^{(s)}$ differ by the orientation of the edge $\{r,s\}$ only. Consequently, the coordinate systems $\Psi^{(r)}$ and $\Psi^{(s)}$ are only locally changed by a fixed bivariate rational map parametrized by $q_{r,s}$.
The relation with $H_{\mathrm{I}}^+$ class of the Yang--Baxter hierarchy, \cite{SasUoz2024}, refers to \eqref{Gq}, \eqref{HI} and \eqref{HI0}.
\begin{lemma}[Local form of the root change]
	\label{rootchange}
	Let $r, s \in V$ be adjacent vertices in $G$.
    \begin{enumerate}
    \item Let  $q_{r,s}\in(-\infty,1)$. Let $k \in \mathcal D$ and set $x \coloneqq \Psi^{(r)}(k)$ and $y \coloneqq \Psi^{(s)}(k)$.
	Then
	\begin{align}
		\label{eq:root-shift-local1}
		y_{v} &= x_{v} \quad v \in V\setminus \{r, s\}, &
		x_{r} &= y_{r} \tfrac{1 - q_{r,s} \, x_{s}}{1 - x_{s}}, &
		y_{s} &= x_{s} \tfrac{1 - q_{r,s} \, y_{r}}{1 - y_{r}}
	\end{align}
    and with $H_q$ defined in \eqref{Hqq} 
    \begin{align}
        (y_s, y_r) &= H_{q_{r,s}}(x_r, x_s), &
        (x_r, x_s) &= H_{q_{r,s}}(y_s, y_r).
    \end{align}
    \item Let  $q_{r,s}\in[0,\infty)$. 	Let $k \in (0,\infty)^V$ and set $x \coloneqq \psi^{(r)}(k)$ and $y \coloneqq \psi^{(s)}(k)$.
	Then
	\begin{align}
		\label{eq:root-shift-local}
		y_{v} &= x_{v} \quad v \in V\setminus\{r, s\}, &
		x_{r} &= y_{r} \tfrac{1 + q_{r,s} \, x_{s}}{1 + x_{s}}, &
		y_{s} &= x_{s} \tfrac{1 + q_{r,s} \, y_{r}}{1 + y_{r}}
	\end{align}
    and with $h_q$ defined in \eqref{Hq}
    \begin{align}
        (y_s, y_r) &= h_{q_{r,s}}(x_r, x_s), &
        (x_r, x_s) &= h_{q_{r,s}}(y_s, y_r).
    \end{align}
    
    \end{enumerate}
\end{lemma}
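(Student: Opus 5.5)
The plan is to prove part (2) directly from the descendant‑set description \eqref{psi} and to obtain part (1) from it via the sign‑flip relations \eqref{delta} and \eqref{ppsi}. The key observation is that $G^{(r)}$ and $G^{(s)}$ induce the same descendant sets for every $v\notin\{r,s\}$. Indeed, removing the edge $\{r,s\}$ splits $G$ into two subtrees: $T_r\ni r$ and $T_s\ni s$.
\begin{itemize}
\item If $v\in T_s\setminus\{s\}$, the path from $r$ to $v$ passes through $s$, and so does the path from $s$ to $v$. Hence $\de^{(r)}(v)=\de^{(s)}(v)$.
\item The symmetric argument applies to $v\in T_r\setminus\{r\}$.
\end{itemize}
Therefore $y_v=x_v$ for all $v\in V\setminus\{r,s\}$, directly from \eqref{psi}. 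This identification is the only place where the tree structure is used substantively.

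For the two special vertices, I would use the recursion \eqref{psi_rep} (Lemma \ref{pp_rep}). In $G^{(r)}$, the children of $r$ are $\ch^{(s)}(r)\cup\{s\}$. Hence
\[
x_r=k_r\frac{1+q_{r,s}x_s}{1+x_s}\prod_{w\in\ch^{(s)}(r)}\frac{1+q_{w,r}x_w}{1+x_w}.
\]
In $G^{(s)}$, the vertex $r$ is a child of $s$, with $\ch^{(s)}(r)=\ch^{(r)}(r)\setminus\{s\}$, so
\[
y_r=k_r\prod_{w\in\ch^{(s)}(r)}\frac{1+q_{w,r}y_w}{1+y_w}.
\]
Since $y_w=x_w$ for these $w$, dividing the two expressions gives $x_r=y_r\frac{1+q_{r,s}x_s}{1+x_s}$. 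The identity $y_s=x_s\frac{1+q_{r,s}y_r}{1+y_r}$ follows symmetrically by swapping the roles of $r$ and $s$.

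Next I would solve these two relations for $(y_s,y_r)$ in terms of $(x_r,x_s)$. From the first relation, $y_r=x_r\frac{1+x_s}{1+q x_s}$, which is exactly the second component of $h_q(x_r,x_s)$ in \eqref{Hq}. Substituting into the second relation gives
\[
y_s=x_s\,\frac{1+qx_s+qx_r+qx_rx_s}{1+qx_s+x_r+x_rx_s},
\]
which matches the first component of $h_q(x_r,x_s)$, with $x=x_r$ and $y=x_s$. The reverse identity $(x_r,x_s)=h_{q}(y_s,y_r)$ follows from the symmetric roles of $r$ and $s$: exchanging $r\leftrightarrow s$ exchanges $x\leftrightarrow y$ in the relations. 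This uses no involutivity of $h_q$ beyond the symmetry of the derivation.

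For part (1), I would reverse the sign. For $k\in\mathcal D$, Lemma \ref{lem:ppsi} gives $\Psi^{(r)}(k)=-\psi^{(r)}(-k)$, where \eqref{psi_rep} holds on $-\mathcal D$ for $q\in(-\infty,1)^E$. Everything above used only \eqref{psi} and \eqref{psi_rep}, so it transfers verbatim. Replacing $x,y$ by $-x,-y$ turns the relations into those of \eqref{eq:root-shift-local1}, and $h_q(-a,-b)=-H_q(a,b)$ by direct comparison of \eqref{Hq} and \eqref{Hqq}.

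The main obstacle is bookkeeping rather than a conceptual difficulty: checking the rational algebra for $y_s$, and confirming that all denominators are nonzero. The latter holds because $1+x_s>0$, and $1-x_s>0$ in the first‑kind case, by Lemma \ref{well_def}.
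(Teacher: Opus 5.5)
Your proposal is correct and follows essentially the paper's argument. Coordinates off $\{r,s\}$ agree because the descendant sets agree, and the recursion \eqref{psi_rep} at $r$ and at $s$ shares a common product over the remaining neighbours, which yields the two local relations. The differences are cosmetic: the paper treats both kinds at once with a $\pm$ sign directly from \eqref{Psi_rep}, whereas you obtain part (1) from part (2) via Lemma \ref{lem:ppsi} and $h_q(-a,-b)=-H_q(a,b)$, and you also write out the algebra identifying $h_q$, which the paper leaves implicit.
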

\begin{proof}
Note that the orientation of $G^{(r)}$ and $G^{(s)}$ agrees on every edge except $\{r,s\}$. Therefore, by \eqref{Psi} and \eqref{psi}, $\Psi^{(r)}_v=\Psi^{(s)}_v$ in case (1)  and $\psi^{(r)}_v=\psi^{(s)}_v$ in case (2) for $v\in V\setminus\{r,s\}$. Furthermore, in view of \eqref{Psi_rep} and \eqref{psi_rep}
\[
x_r=k_r\tfrac{1\pm q_{r,s}x_s}{1\pm x_s}\,A, \quad x_s=k_sB,\quad y_r=k_rA,\quad y_s=k_s\tfrac{1\pm q_{r,s}y_r}{1\pm y_r}B,
\]
where $A=\prod_{v\in \mathfrak{nb}(r)\setminus\{s\}}\tfrac{1\pm q_{v,r}z_v}{1\pm z_v}$ and $B=\prod_{v\in \mathfrak{nb}(s)\setminus\{r\}}\tfrac{1\pm q_{v,r}z_v}{1\pm z_v}$ and $z_v$ denotes the common value of $x_v=y_v$. Here $\mathfrak{nb}(v)=\{w\in V:\,\{w,v\}\in E\}$. For $\pm$ everywhere above we choose $-$ in case (1) and $+$ in case (2). Hence the result follows.
\end{proof}

\section{Forward direction: tree-beta laws imply independence} \label{sec:tb-ip}
\subsection{Proof of Theorem \ref{direct}}
\begin{proof}
$\,$ Recall the notation: for $r\in V$ and $v\in V\setminus \{r\}$, we denote $v_r=\mathfrak{pa}^{(r)}(v)$.

\begin{enumerate}
    \item[(ii)]
	Fix $r \in V$ and write $x = \psi^{(r)}(k)$ for $k \in (0,\infty)^V$.
	By \eqref{phi} we see that $k_{v}(x)=\phi^{(r)}_v(x)$ depends only on $(x_w,\,w\in\debar^{(r)}(v))$. Therefore $\partial k_{v}(x)/ \partial x_w = 0$ for $w\not\in \debar^{(r)}(v)$. By ordering the vertices according to the descendant relation induced by $G^{(r)}$, the Jacobian matrix $\tfrac{\partial k}{\partial x}$ is triangular having $\tfrac{\partial k_{v}(x)}{ \partial x_v}$, $v\in V$, on its diagonal. Thus
	\begin{align}
		\label{eq:jacobian-x-u}
		\det \tfrac{\partial k}{\partial x}
		= \prod_{v \in V}\,\tfrac{\partial k_v(x)}{\partial x_v} =\prod_{v \in V}\, \prod_{w \in \ch^{(r)}(v)} \tfrac{1 + x_{w}}{1 + q_{w,v}x_{w}}
		= \prod_{v \in V \setminus \{r\}} \tfrac{1 + x_{v}}{1 + q_{v_r,v}x_{v}}.
	\end{align}
    If $K\sim\TB_{\mathrm{II}}^G(a,\beta,q)$, then the random vector $\psi^{(r)}(K)$ has a density $g$, which can be derived from the density \eqref{denII} by the change of variables formula \eqref{kx}. We apply the above Jacobian as well as \eqref{dfac} and get
	\begin{align}\nonumber
		g(x)&=c_{\mathrm{II}}\Biggl(\prod_{v \in V \setminus \{r\}} \tfrac{1 + x_{v}}{1 + q_{v_r,v}x_{v}}\Biggr)\,\tfrac1{\bigl(\prod_{v\in V}(1+x_v)\bigr)^\beta}
		\Biggl(\prod_{v \in V} \Bigl(x_v\prod_{w\in\mathfrak{ch}^{(r)}(v)}\,\tfrac{1+x_w}{1+q_{v,w}x_w}\Bigr)^{a_v-1}\Biggr)\,\1{(0,\infty)^V}(x) \\
        &=c_{\mathrm{II}}\tfrac{x_r^{a_r-1}}{(1+x_r)^{\beta}}\1{(0,\infty)}(x_{r})\,\prod_{v\in V\setminus \{r\}}\, 
		\biggl(\tfrac{x_{v}^{a_{v} - 1}}{(1 + x_{v})^{\beta - a_{v_r}} \, (1 + q_{v_r,v}x_{v})^{a_{v_r}}}
		\, \1{(0,\infty)}(x_{v})\biggr),\label{gII}
        \end{align}
        which is the density of the claimed product law.

    Conversely, since $\psi^{(r)}$ is a bijection, the law of $\psi^{(r)}(K)$ determines uniquely the law of $K$. Hence, the claimed product law implies $K\sim\mathrm{TB}_{\mathrm{II}}(a,\beta,q)$.
\item[(i)]
	The proof in this case essentially follows the lines above. The only change is to take $x\in(0,1)^V$ and to replace all $+$'s with $-$'s, while making the change of variables \eqref{kvx} in the density \eqref{denI}, applying \eqref{Dfac} and computing the Jacobian. It gives the density $g$ of $\Psi^{(r)}(K)$ of the form
    \begin{align}
    &g(x)=c_{\mathrm I}\, x_r^{a_r-1}(1-x_r)^{\beta-1}\1{(0,1)}(x_{r}) \nonumber\\
    &\times\,\prod_{v\in V\setminus \{r\}}\, 
		\biggl(x_{v}^{a_{v} - 1}(1 - x_{v})^{\beta + a_{v_r}-1} \, (1 - q_{v_r,v}x_{v})^{-a_{v_r}}
		\, \1{(0,1)}(x_{v})\biggr), \label{gI}
    \end{align}
    which, up to the normalizing constant, is the density of the claimed product law.

    Conversely, since $\Psi^{(r)}$ is a bijection, the law of $\Psi^{(r)}(K)$ determines uniquely the law of $K$. Hence, the claimed product law implies $K\sim\mathrm{TB}_{\mathrm I}(a,\beta,q)$.
   \end{enumerate} 
\end{proof}

\subsection{Normalizing constants}

The normalizing constants of $\mathrm{TB}_{\mathrm I}^G$ and $\mathrm{TB}_{\mathrm{II}}^G$ distributions can be obtained from the independence statements of Theorem \ref{direct}.
\begin{corollary} $\,$
	\begin{enumerate}
	    \item[(i)] Assume that $q \in (-\infty, 1)^E$, $\beta>0$ and $a\in(0, \infty)^V$.
	Then $\TB^{G}_{\mathrm{I}}(a, \beta, q)$ is a probability measure, and the normalizing constant $c_{\mathrm I}$ takes the form
	\begin{align}
    c_{\mathrm I}=\tfrac1{\Gamma(\beta)\,\prod_{v\in V}\,\bigl[\Gamma(a_v)\Gamma(a_v+\beta)^{d_v-1}\bigr]}\,\prod_{\{u,w\}\in E}\,
    \tfrac{\Gamma(a_u+a_w+\beta)}{{}_{2}F_{1}(a_{u}, a_{w}; a_{u} + a_{w} + \beta; q_{u, w})},\label{cI}
	\end{align}
    where $d_v=|\{u\in V:\,\{u,v\}\in E\}|$ is the degree of the vertex $v\in V$.
        \item[(ii)] Assume that $q \in[0, \infty)^E$, $a \in (0, \beta)^V$, and that $\beta > a_{u} + a_{v}$ whenever $q_{u, v} = 0$.
	Then $\TB^{G}_{\mathrm{II}}(a, \beta, q)$ is a probability measure and the normalizing constant $c_{\mathrm{II}}$ takes the form
	\begin{align}
   c_{\mathrm{II}}=\tfrac{\Gamma(\beta)^{|V|}}{\prod_{v\in V}\,\bigl[\Gamma(a_v)\Gamma(\beta-a_v)\bigr]}
    \,\tfrac1{\prod_{\{u,w\}\in E}\,{}_{2}F_{1}(a_{u}, a_{w}; \beta; 1 - q_{u, w})}.\label{cII}
	\end{align}
	\end{enumerate}	
\end{corollary}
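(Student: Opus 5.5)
Fix any root $r\in V$ and integrate the density $g$ of $\Psi^{(r)}(K)$ (resp. $\psi^{(r)}(K)$) obtained in the proof of Theorem \ref{direct}. By Lemma \ref{well_def} the change of variables $k=\Phi^{(r)}(x)$ (resp. $k=\phi^{(r)}(x)$) is a bijection between $\mathcal D$ and $(0,1)^V$ (resp. between $(0,\infty)^V$ and itself). The computations leading to \eqref{gI} and \eqref{gII} did not use integrability, so they give the identities
\[
\int_{\mathcal D}[\Delta_G(k)]^{\beta-1}\prod_{v}k_v^{a_v-1}\,dk=\int_0^1 x^{a_r-1}(1-x)^{\beta-1}dx\prod_{v\neq r}\int_0^1 x^{a_v-1}(1-x)^{\beta+a_{v_r}-1}(1-q_{v_r,v}x)^{-a_{v_r}}dx,
\]
and the analogous identity in the second kind case. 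The integrals on the right are finite under the stated assumptions: in case (i) because $q<1$, and in case (ii) because $a_v<\beta$, and $a_v+a_{v_r}<\beta$ when $q_{v_r,v}=0$. This shows at once that the measures are probability measures. Moreover $c_{\mathrm I}$ (resp. $c_{\mathrm{II}}$) is the reciprocal of the product of the univariate normalizing constants read off from \eqref{bI}, \eqref{gbI} (resp. \eqref{bII}, \eqref{gbII}).

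For (i), the root factor contributes $\Gamma(a_r)\Gamma(\beta)/\Gamma(a_r+\beta)$. A vertex $v\neq r$ contributes
\[
\frac{\Gamma(a_v)\Gamma(\beta+a_{v_r})}{\Gamma(a_v+a_{v_r}+\beta)}\,{}_2F_1(a_v,a_{v_r};a_v+a_{v_r}+\beta;q_{v_r,v}).
\]
Each $v\neq r$ corresponds bijectively to the edge $\{v_r,v\}$, so the hypergeometric factors and the $\Gamma(a_u+a_w+\beta)$ factors run over $E$, using the symmetry of ${}_2F_1$ in its first two parameters. It remains to count the factors $\Gamma(a_u+\beta)$. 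In the denominator of $c_{\mathrm I}$, $\Gamma(a_u+\beta)$ appears once for each child of $u$. In the numerator it appears once, at $u=r$. The number of children of $u$ is $d_u-1$ for $u\neq r$ and $d_r$ for $u=r$, so the net exponent is $d_u-1$ for every $u$. This gives \eqref{cI}, which is visibly independent of $r$, as it must be.

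For (ii), the root contributes $\Gamma(a_r)\Gamma(\beta-a_r)/\Gamma(\beta)$. A vertex $v\neq r$ contributes, by \eqref{gbII} with $(\alpha,\beta,\gamma)=(a_v,\beta-a_{v_r},a_{v_r})$,
\[
\frac{\Gamma(a_v)\Gamma(\beta-a_v)}{\Gamma(\beta)}\,{}_2F_1(a_v,a_{v_r};\beta;1-q_{v_r,v}).
\]
Multiplying over $V$ and re-indexing edges as before yields \eqref{cII}.

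The main obstacle is the boundary case $q_{v_r,v}=0$ in (ii), where $\GBII$ was defined as $\BetaII(a_v,\beta-a_{v_r})$. Here I must check that the formula still agrees. Its constant $\Gamma(a_v)\Gamma(\beta-a_v-a_{v_r})/\Gamma(\beta-a_{v_r})$ should equal the general expression evaluated at $q=0$. By Gauss's summation,
\[
{}_2F_1(a_v,a_{v_r};\beta;1)=\frac{\Gamma(\beta)\Gamma(\beta-a_v-a_{v_r})}{\Gamma(\beta-a_v)\Gamma(\beta-a_{v_r})},
\]
which is valid precisely because $\beta>a_v+a_{v_r}$. Substituting this, the two expressions coincide. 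Similarly, in (i) negative $q$ causes no problem, since $1-qx>0$ on $(0,1)$.
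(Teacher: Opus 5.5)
Your proposal is correct and follows the paper's proof. As there, you fix a root $r$, read $c_{\mathrm I}$ (resp.\ $c_{\mathrm{II}}$) off the factorized densities \eqref{gI} (resp.\ \eqref{gII}) as a product of univariate $\BetaI/\GBI$ (resp.\ $\BetaII/\GBII$) constants, and regroup the factors over vertices and edges; your explicit exponent count for $\Gamma(a_u+\beta)$, the integrability check, and the Gauss-summation verification at $q_{v_r,v}=0$ are details the paper leaves implicit, and you handle them correctly.
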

\begin{proof}$\,$ Recall again the notation: for $r\in V$ and $v\in V\setminus \{r\}$, we denote $v_r=\mathfrak{pa}^{(r)}(v)$.
    \begin{enumerate}
        \item[(i)] In view of \eqref{gI} the constant $c_{\mathrm{I}}$ can be computed explicitly as a product of normalizing constants as given in \eqref{bI}  for $\mathrm{Beta}_{\mathrm I}$ and  \eqref{gbI} for $\mathrm{GB}_{\mathrm I}$ distribution from the first part of Theorem \ref{direct}. Therefore, for any $r\in V$
        \[
        c_{\mathrm I}=\tfrac{\Gamma(a_r+\beta)}{\Gamma(\beta)\Gamma(a_r)}\,\prod_{v\in V\setminus\{r\}}\,\tfrac{\Gamma(a_v+a_{v_r}+\beta)}{\Gamma(a_v)\Gamma(a_{v_r}+\beta)\,_2F_1(a_{v_r},a_v;a_v+a_{v_r}+\beta;q_{v_r,v})}.
        \]
        Rearranging the factors into products over vertices and edges gives \eqref{cI}. 
        
        \item[(ii)] In view of \eqref{gII} the constant $c_{\mathrm{II}}$ can be computed  explicitly as a product of normalizing constants as given in \eqref{bII}  for $\mathrm{Beta}_{\mathrm{II}}$ and  \eqref{gbII} for $\mathrm{GB}_{\mathrm{II}}$ distribution from the second part of Theorem \ref{direct}. Therefore, for any $r\in V$
        \[
        c_{\mathrm{II}}=\tfrac{\Gamma(\beta)}{\Gamma(a_r)\Gamma(\beta-a_r)}\,\prod_{v\in V\setminus\{r\}}\,\tfrac{\Gamma(\beta)}{\Gamma(a_v)\Gamma(\beta-a_v)\,_2F_1(a_{v_r},a_v;\beta;1-q_{v_r,v})}.
        \]
        Rearranging the factors into products over vertices and edges gives \eqref{cII}. 
    \end{enumerate}
\end{proof}

\section{Converse direction: independence imply tree beta laws} \label{sec:characterisation}
\label{sec:bivariate}
This section is devoted to prove Theorem \ref{Charact}.

\begin{proof}[Proof of Theorem \ref{Charact} (ii)]
Set	$X^{(r)} \coloneqq \psi^{(r)}(K)$, $r \in V$. Then Lemma \ref{well_def} implies $\phi^{(r)}(X^{(r)}) = K$, $r\in V$. The proof proceeds by induction on $|E|$. 

\bigskip
	Assume $|E|=1$ and denote $V = \{\ell, p\}$. Then Lemma \ref{rootchange} implies $(X^{(p)}_p, X^{(p)}_\ell) = h_{q_{\ell, p}}(X^{(\ell)}_\ell, X^{(\ell)}_p)$. 
    By the assumption, since $\ell$ and $p$ are both leaves in $V$, $X^{(p)}_p$ and $X^{(p)}_\ell$ are independent, $X^{(\ell)}_\ell$ and $X^{(\ell)}_p$ are independent, and all four random variables are non-Dirac and positive. 
    Hence Proposition \ref{2bII}  yields 
    \begin{align}
        \label{eq:distribution-bivariate}
        \left.
        \begin{alignedat}{2}
		(X_{p}^{(p)}, X_{\ell}^{(p)})
		&\sim \Beta_{\mathrm{II}}(a_{p}, \beta)
		&&\otimes \GB_{\mathrm{II}}(a_{\ell}, \beta - a_{p}, a_{p}; q_{\ell, p}), \\
		(X_{\ell}^{(\ell)}, X_{p}^{(\ell)})
		&\sim \Beta_{\mathrm{II}}(a_{\ell}, \beta)
		&&\otimes \GB_{\mathrm{II}}(a_{p}, \beta - a_{\ell}, a_{\ell}; q_{\ell, p})
        \end{alignedat}
        \right.
	\end{align}  
    for some parameters $\beta>0$ and $a_p,a_\ell\in(0,\beta)$ (and $\beta>a_\ell+a_p$ when $q_{\ell,p}=0$). Thus, by Theorem \ref{direct} it follows that 
    \[
    K=\phi^{(\ell)}(X_\ell^{(\ell)},X_p^{(\ell)})\sim \mathrm{TB}_{\mathrm{II}}^G(a,\beta,q),
    \]
    where $a=(a_\ell,a_p)$ and $q=(q_{\ell,p})$.
    
  \bigskip
	Assume now that $|E| > 1$, and let $\ell\in L_G$. 
    Then there exists the unique $p\in V$ such that $\{p,\ell\}\in E$. Denote $\check V = V \setminus \{\ell\}$ and $\check G = G_{\check V}$. Clearly, $|E_{\check V}|=|E|-1$.
    Let $\check{K}$ be defined by
	\begin{align}
		\check K_{p} &:= K_{p} \frac{1 + q_{\ell, p} K_{\ell}}{1 + K_{\ell}},
		&
		\check K_{u} &:= K_{u}, \quad u \in \check{V} \setminus \{p\}.
	\end{align}
    Clearly, $\check K$ takes values in $(0,\infty)^{\check V}$. Furthermore, for the maps $\check \psi^{(r)}$, $r\in \check V$, associated with $\check G^{(r)}$ and $\check q = q|_{E(\check V)}$, as in Definition \ref{maps}, the recursion \eqref{psi_rep} gives 
	\begin{align}\label{cxr}
		\check X^{(r)} \coloneqq \check \psi^{(r)}(\check K) = X^{(r)}|_{\check V}\quad r\in \check V.
	\end{align} 

	To apply induction assumption to $\check K$, we need to verify that components of $\check X^{(w)}$ are non-Dirac and independent for every $w\in L_{\check G}$. 
    
    For $w=\ell'\in L_{\check G}\setminus \{p\}$, since  $\ell'\in L_G$, it follows directly from \eqref{cxr}. In case $p\in L_{\check G}$, we need to study components of $\check X^{(p)}=X^{(p)}|_{\check V}$.
    
    By Lemma \ref{rootchange}, $X_{u}^{(p)} = X_{u}^{(\ell)}$ for all $u \in \check{V}\setminus \{p\}$ (which implies that  $X_u^{(p)}$, $u \in \check{V}\setminus \{p\}$, are non-Dirac and independent) and 
    \begin{align}
        \label{xpp}
        X^{(p)}_p=h_{q_{p,\ell}}^{(1)}\bigl(X^{(\ell)}_{\ell},X^{(\ell)}_{p}\bigr),
    \end{align}
     where $h_q^{(1)}$ is the first coordinate of $h_q$. Thus, $X^{(p)}_p$ is non-Dirac since $X^{(\ell)}_\ell$ and  $X^{(\ell)}_p$ are independent and non-Dirac. Furthermore, \eqref{xpp} implies that $X^{(p)}_p$ and $(X^{(\ell)}_u)_{u\in \check V\setminus \{p\}}$ are independent. Hence, by \eqref{cxr},  coordinates of $\check X^{(p)}$ are independent and non-Dirac.

	Therefore by the induction hypothesis applied to $\check K$ (and $\check G$), there exist  parameters  $\beta>0$ and $\check a=(a_v)_{v \in \check V}\in(0,\beta)^{\check V}$ with $a_w+a_v<\beta$ whenever $q_{w,v}=0$, $\{w,v\}\in E_{\check V}$, such that
	\begin{align}
		\check K \sim \TB^{\check G}_{\mathrm{II}}(\check a, \beta, \check q).
	\end{align}
   Therefore, Theorem \ref{direct} yields that for any $r\in\check V$
	\begin{align}
        \label{eq:distribuion-Xr}
        X^{(r)}|_{\check V} =
		\check X^{(r)}
		\sim
		\Beta_{\mathrm{II}}(a_r,\beta)\otimes\bigotimes_{v \in \check V\setminus\{r\}} \GB_{\mathrm{II}} (a_v, \beta - a_{v_r}, a_{v_r}; q_{v_r,v}),
	\end{align}
    where $v_r=\mathfrak{pa}(v)$ in $\check G^{(r)}$ (note that $\mathfrak{pa}(v)=\mathfrak{pa}^{(r)}(v)$ for $v\in\check V$).
	Since $|E| > 1$, there exists an $\ell'\in \check V$, such that $\ell'\in (L_{\check G}\cap L_G)\setminus\{p\}$.
	In view of \eqref{cxr} we have $X^{(p)}_p = \check X^{(p)}_p$.  Thus Lemma \ref{well_def} implies
    \begin{align}\label{xpp2}
        X^{(p)}_p  = \check \psi^{(p)}_p\left(\check\phi^{(\ell')}\left(\check X^{(\ell')}\right)\right).
    \end{align}
    Since $\ell$ has no children in $G^{(p)}$ and in $G^{(\ell')}$, it follows that $X^{(p)}_{\ell} =K_\ell= X_{\ell}^{(\ell')}$. Since, by the assumption, components of $X^{(\ell')}$ are independent, \eqref{cxr} implies that $X^{(p)}_{\ell}$ and $\check X^{(\ell')}$ are independent. Hence \eqref{xpp2} yields the independence of $X_{\ell}^{(p)}$ and  $X_{p}^{(p)}$. 
    On the other hand, the pair $X_{\ell}^{(\ell)},$ and $X_{p}^{(\ell)}$ are independent and  non-Dirac because $\ell \in L_G$. 
    Referring again to Lemma \ref{rootchange}, we get $(X_p^{(p)},X^{(p)}_\ell)=h_{q_{p,\ell}}(X_{\ell}^{(\ell)},X_p^{(\ell)})$. Hence, by  Proposition \ref{2bII},  there exist parameters $\bar \beta>0$ and $\bar a_p, a_{\ell}\in(0,\bar \beta)$ (additionally, $a_\ell+\bar a_p<\bar \beta$ when $q_{\ell,p}=0$)  such that 
        \begin{alignat}{2}
        \label{eq:distribuion-Xp-Xell}
		(X_{p}^{(p)}, X_{\ell}^{(p)})
		&\sim \Beta_{\mathrm{II}}(\bar a_{p}, \bar \beta)
		&&\otimes \GB_{\mathrm{II}}(a_{\ell}, \bar \beta - \bar a_{p}, \bar a_{p}; q_{\ell, p}), \\
		(X_{\ell}^{(\ell)}, X_{p}^{(\ell)})
		&\sim \Beta_{\mathrm{II}}(a_{\ell}, \bar \beta)
		&&\otimes \GB_{\mathrm{II}}(\bar a_{p}, \bar \beta -  a_{\ell}, a_{\ell}; q_{\ell, p}).
	\end{alignat}

    Comparing the representations of the distribution of $X^{(p)}_p$ that appear in \eqref{eq:distribuion-Xr} and \eqref{eq:distribuion-Xp-Xell}, one obtains that $\beta = \bar \beta$, $a_p =  \bar a_p$.
    
  Since $X^{(\ell)}$ has independent coordinates, the  random vectors
  $$
  (X_p^{(p)},X_\ell^{(p)})=h_{q_{p,\ell}}(X_{\ell}^{(\ell)},X_p^{(\ell)})\quad\text{and}\quad (X_{u}^{(p)})_{u\in V\setminus\{p,\ell\}} = (X_{u}^{(\ell)})_{u\in V\setminus \{p,\ell\}}
  $$ 
  are independent.  Therefore,   $X^{(p)}$ has independent components, whence, combining \eqref{eq:distribuion-Xr} for $r=p$ with \eqref{eq:distribuion-Xp-Xell}   we get 
	\begin{align}
		X^{(p)}
        \sim
		\Beta_{\mathrm{II}}(a_p,\beta)\otimes\,\bigotimes_{v \in V\setminus \{p\}} \GB_{\mathrm{II}} \paren{a_{v}, \beta - a_{v_r}, a_{v_r}; q_{v_r,v}},
	\end{align}
    where $v_r=\mathfrak{pa}^{(p)}(v)$ for $v\in V\setminus \{r\}$. Then Remark \ref{rem1} yields
	\begin{align}
		K \sim \TB^{G}_{\mathrm{II}}(a, \beta, q),
	\end{align}
	which completes the induction step and the proof. 
\end{proof}

\bigskip\begin{proof}[Proof of Theorem \ref{Charact} (i)]
Set	$X^{(r)} \coloneqq \Psi^{(r)}(K)$, $r \in V$. Then \eqref{well_def} implies  $\Phi^{(r)}(X^{(r)}) = K$, $r\in V$. The proof essentially follows the lines of the proof of (ii). It proceeds by induction on $|E|$. 

In case $|E|=1$, we rely on Lemma \ref{rootchange}, the bivariate characterization given in Proposition \ref{2bI} and Theorem \ref{direct} - repeating the argument of the previous case.

In case $|E| > 1$, as in the proof of (ii), we choose $\ell\in L_G$, $p\in V$, denote $\check V = V \setminus \{\ell\}$ and $\check G = G_{\check V}$. 
    Let $\check{K}$ be defined by
	\begin{align}
		\check K_{p} &:= K_{p} \frac{1 - q_{\ell, p} K_{\ell}}{1 - K_{\ell}},
		&
		\check K_{u} &:= K_{u}, \quad u \in \check{V} \setminus \{p\}.
	\end{align}
    Under the assumption that 
    \begin{equation}\label{ass}
        \check K\in \mathcal D_{\check G}\quad \text{a.s.}
    \end{equation}
    the proof proceeds, step by step,  as in case (ii) with obvious modifications: $\check{\Psi}^{(r)}$ in place of $\check{\psi}^{(r)}$, $H_{q_{p,\ell}}$ in place of $h_{q_{p,\ell}}$ and Proposition \ref{2bI} in place of Proposition \ref{2bII}. So we obtain $K\sim \mathrm{TB}_{\mathrm I}^G(a,\beta,q)$.

Hence, the proof will be completed when we prove \eqref{ass}. To this end, we choose $r\in L_G\cap L_{\check G}$ and consider a function $\Theta:\mathcal D_G\to\mathbb R^{\check V}$ defined by
\[
\Theta(k)=\Phi_{\check G}^{(r)}((\Psi^{(r)}_{G,v}(k))_{v\in \check V}),\quad k\in\mathcal D_G.
\]
Note that Lemma \ref{well_def} implies that $\Theta(\mathcal D_G)\subseteq\mathcal D_{\check G}$. Referring again to Lemma \ref{well_def} and to recursion of Lemma \ref{pp_rep}, we get
\begin{align}
\Theta_v(k)=&\Phi_{\check G,v}^{(r)}((\Psi^{(r)}_{G,v}(k))_{v\in \check V})=\Psi^{(r)}_{G,v}(k)\,\prod_{w\in\mathfrak{ch}_{\check G}^{(r)}(v)}\,\tfrac{1-\Psi^{(r)}_{G,w}(k)}{1-q_{w,v}\Psi^{(r)}_{G,w}(k)}\nonumber\\
=&k_v\Biggl(\prod_{u\in\mathfrak{ch}_G^{(r)}(v)}\,\tfrac{1-q_{u,v}\Psi^{(r)}_{G,u}(k)}{1-\Psi^{(r)}_{G,u}(k)}\Biggr)\,\prod_{w\in\mathfrak{ch}_{\check G}^{(r)}(v)}\,\tfrac{1-\Psi^{(r)}_{G,w}(k)}{1-q_{w,v}\Psi^{(r)}_{G,w}(k)}.\label{theta}
\end{align}
Note that for $v\in\check V\setminus\{p\}$ we have $\mathfrak{ch}_{G}^{(r)}(v)=\mathfrak{ch}_{\check G}^{(r)}(v)$ and $\mathfrak{ch}_{G}^{(r)}(p)=\mathfrak{ch}_{\check G}^{(r)}(p)\cup\{\ell\}$. Therefore, \eqref{theta} gives $\Theta_v(k)=k_v$ for $v\in \check V\setminus\{p\}$ and $\Theta_p(k)=k_{p} \tfrac{1 - q_{\ell, p} k_{\ell}}{1 - k_{\ell}}$. Consequently, $\Theta(K)=\check K\in\mathcal D_{\check G}$.
\end{proof}
\section{Tree-Kummer model as a scaling limit of tree-$\mathrm{Beta}_{\mathrm{II}}$ models} \label{sec:limits}

This section is devoted to establishing scaling relations for $\delta_{G}$ that facilitate the passage from tree-beta laws of the second kind to tree-Kummer laws.
\begin{definition}
	\label{def:polynomials-I-II-III}
	Let $G = (V, E)$ be an undirected tree with edge weights $q = (q_{e})_{e \in E} \in \R^{E}$.
	For $k = (k_{v})_{v \in V} \in \R^{V}$ define
	\begin{align}\label{DKum}
		\Delta_{G}^{\mathrm{Kum}}(k|q) \coloneqq \sum_{\substack{\varnothing \ne U \subseteq V \\G_U \, \text{is a tree}}} q^{E_U}k^{U}.
	\end{align}
\end{definition}
The density of the tree-Kummer distribution $\mathrm{TK}^G(a,\beta,q)$, see \eqref{tKum}, can be written as
\[
f(k)\propto e^{-\beta\Delta_{G}^{\mathrm{Kum}}(k|q)}\;\Biggl(\prod_{v\in V}\,k_v^{a_v-1}\Biggr)\,\mathbf{1}_{(0,\infty)^V}(k),
\]
with $e^{-\beta}$ absorbed into the normalizing constant. It appears that this law is a limiting case of the second kind tree-beta distribution. 

\begin{theorem}
	\label{prop:tree-kummer-limit}
	Let $G = (V, E)$ be a  tree with egde weights  $q\in(0,\infty)^E$.
	Fix $a = (a_{v})_{v \in V} \in(0, \infty)^{V}$  and $\beta > 0$.
	For any $\varepsilon>0$ such that $\varepsilon a\in(0,\beta)^V$ let  $K^{(\varepsilon)}$ be a random vector with the second kind  tree beta distribution,
	\begin{align}
		K^{(\varepsilon)} \sim \TB^{G}_{\mathrm{II}} \paren{a, \beta / \varepsilon, q/\varepsilon}.
	\end{align}
	
	Then, as $\varepsilon \to 0$, 
    \begin{equation}\label{zbiez}
    \varepsilon^{-1}\,K^{(\varepsilon)}\Rightarrow K\sim\mathrm{TK}^G(a, \beta, q) \quad \text{in distribution.}
	\end{equation}
\end{theorem}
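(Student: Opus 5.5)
Set $Z^{(\varepsilon)}:=\varepsilon^{-1}K^{(\varepsilon)}$. By \eqref{denII} with $q$ replaced by $q/\varepsilon$ and $\beta$ by $\beta/\varepsilon$, a change of variables shows that $Z^{(\varepsilon)}$ has density proportional to
\[
\delta_G^{(q/\varepsilon)}(\varepsilon z)^{-\beta/\varepsilon}\prod_{v\in V} z_v^{a_v-1}\,\1{(0,\infty)^V}(z).
\]
We then use the scaling identity
\[
\delta_G^{(q/\varepsilon)}(\varepsilon z)=\sum_{U\subseteq V}\varepsilon^{|U|-|E_U|}q^{E_U}z^U .
\]
Since $G_U$ is a forest, $|U|-|E_U|$ equals the number $c(U)\ge 1$ of connected components for $U\neq\varnothing$, with equality exactly when $G_U$ is a tree. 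Hence
\[
\delta_G^{(q/\varepsilon)}(\varepsilon z)=1+\varepsilon\,\Delta_G^{\mathrm{Kum}}(z|q)+\sum_{c(U)\ge 2}\varepsilon^{c(U)}q^{E_U}z^U ,
\]
and for each fixed $z$ we get $-\tfrac{\beta}{\varepsilon}\log\delta_G^{(q/\varepsilon)}(\varepsilon z)\to-\beta\Delta_G^{\mathrm{Kum}}(z|q)$. So the unnormalized densities converge pointwise to the unnormalized $\mathrm{TK}^G(a,\beta,q)$ density.

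By Scheffé's lemma, it then suffices to show that the integrals converge as well, i.e.\ that the normalizing constants converge. A clean route is dominated convergence. Because all terms of the polynomial are nonnegative, $\delta\ge 1+\varepsilon\Delta^{\mathrm{Kum}}(z|q)$, and so
\[
\delta^{-\beta/\varepsilon}\le(1+\varepsilon D)^{-\beta/\varepsilon},\qquad D:=\Delta_G^{\mathrm{Kum}}(z|q).
\]
The function $\varepsilon\mapsto(1+\varepsilon D)^{-\beta/\varepsilon}$ is increasing in $\varepsilon$ (since $\log(1+x)/x$ is decreasing). Thus for all $\varepsilon\le\varepsilon_0$ it is bounded by $(1+\varepsilon_0D)^{-\beta/\varepsilon_0}$.

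We need this bound, times $\prod z_v^{a_v-1}$, to be integrable on $(0,\infty)^V$. The only danger is at infinity, since $a_v>0$ handles the origin. Since $D\ge\sum_v z_v$ and $D\ge q^E z^V$, the term $D^{-\beta/\varepsilon_0}$ can be made to decay faster than any polynomial in every direction by choosing $\varepsilon_0$ small. Concretely, I would use $1+\varepsilon_0 D\ge c\prod_v(1+z_v)^{1/|V|}\cdot\ldots$, or more simply bound $D$ from below by a product of single-vertex and full-tree terms via AM--GM. Choosing $\beta/\varepsilon_0$ large then gives an integrable majorant.

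This integrability step is the main technical obstacle. An alternative I would try first is to use the explicit normalizing constant \eqref{cII}: rewrite $c_{\mathrm{II}}$ for parameters $(a,\beta/\varepsilon,q/\varepsilon)$ including the factor $\varepsilon^{\sum a_v}$ from scaling, and show convergence via Stirling and the confluent limit ${}_2F_1(a_u,a_w;\beta/\varepsilon;1-q/\varepsilon)$. This is messier, since both the parameter and the argument scale, so I would prefer the dominated convergence argument.

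The alternative I would keep in reserve works at the level of Theorem~\ref{direct}(ii) and the HV tree result. The independent components $\psi^{(r)}_v$ scaled by $\varepsilon^{-1}$ are $\GBII(a_v,\beta/\varepsilon-a_{v_r},a_{v_r};q/\varepsilon)$ and $\BetaII(a_r,\beta/\varepsilon)$. Each converges after scaling to the Kummer and gamma marginals of the HV model, and the map $\varepsilon^{-1}\phi^{(r)}(\varepsilon\,\cdot)$ converges locally uniformly to the inverse of \eqref{PsiHV}. The continuous mapping theorem would then conclude.
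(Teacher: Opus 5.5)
Your main route is the paper's: you expand $\delta_G(\varepsilon z\,|\,q/\varepsilon)=\sum_U\varepsilon^{\pi(U)}q^{E_U}z^U$ to get pointwise convergence, then use the monotonicity of $\varepsilon\mapsto(1+\varepsilon t)^{-\beta/\varepsilon}$ and dominated convergence to get convergence of the normalizing constants, and conclude by Scheff\'e, which the paper leaves implicit. The integrability step you flag as the main obstacle is immediate. Using only $\delta_G\ge 1+\varepsilon\sum_v z_v$, the majorant $(1+\varepsilon_0\sum_v z_v)^{-\beta/\varepsilon_0}\prod_v z_v^{a_v-1}$ is a Dirichlet-type integral, finite as soon as $\beta/\varepsilon_0>\sum_v a_v$; this is exactly the paper's choice of $c$, so the AM--GM detour is unnecessary.
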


The proof is based on scaling limit properties of the polynomial $\delta_G(x)=\delta_G(x|q)$.
\begin{lemma}
	\label{lim1}
	Let $G = (V, E)$ be a tree and let $q \in \R^{E}$.
	For each fixed $k \in \R^{V}$,
	\begin{align}
		\lim_{\varepsilon \downarrow 0}\,\delta_G(\varepsilon k\,|q/\varepsilon)^{1 / \varepsilon}
		=
		\exp \{\Delta_G^{\mathrm{Kum}}(k\,|\,q) \}.
	\end{align}
\end{lemma}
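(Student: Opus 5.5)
We compute $\delta_G(\varepsilon k\,|\,q/\varepsilon)$ explicitly and organize it by connected components. For $U\subseteq V$ the induced subgraph $G_U$ is a forest, and since $G$ is a tree, $|E_U| = |U| - c(U)$, where $c(U)$ is the number of connected components of $G_U$ (with $c(\varnothing)=0$). Therefore
\begin{equation}
(q/\varepsilon)^{E_U}(\varepsilon k)^U = \varepsilon^{|U|-|E_U|}\,q^{E_U}k^U = \varepsilon^{c(U)}\,q^{E_U}k^U .
\end{equation}
Sorting the sum in Definition \ref{def:polynomial-pG} by $c(U)$ gives
\begin{equation}
\delta_G(\varepsilon k\,|\,q/\varepsilon) = 1 + \varepsilon\,\Delta_G^{\mathrm{Kum}}(k\,|\,q) + R_\varepsilon(k),\qquad R_\varepsilon(k)=\sum_{U:\,c(U)\ge 2}\varepsilon^{c(U)}q^{E_U}k^U .
\end{equation}
The $\varepsilon^1$ term collects exactly the nonempty $U$ with $G_U$ connected, i.e. $G_U$ a tree, which is \eqref{DKum}. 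Since $V$ is finite and $k$ is fixed, $|R_\varepsilon(k)|\le C(k,q)\,\varepsilon^2$ for $\varepsilon\le 1$.

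Next take logarithms. For $\varepsilon$ small the quantity $\delta_G(\varepsilon k\,|\,q/\varepsilon) = 1+\varepsilon D + O(\varepsilon^2)$, with $D:=\Delta_G^{\mathrm{Kum}}(k\,|\,q)$, is positive, so its $1/\varepsilon$ power is well defined even for $k\in\R^V$ of arbitrary sign. Then
\begin{equation}
\tfrac1\varepsilon\log\delta_G(\varepsilon k\,|\,q/\varepsilon) = \tfrac1\varepsilon\log\bigl(1+\varepsilon D+O(\varepsilon^2)\bigr) = D + O(\varepsilon).
\end{equation}
This follows from $\log(1+t)=t+O(t^2)$ as $t\to0$. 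Exponentiating yields the limit $\exp\{D\}$, which is the claim.

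No step is really hard. The one point that needs care is the combinatorial identity $|U|-|E_U|=c(U)$: it uses that every induced subgraph of a tree is a forest, together with Euler's relation for forests. It is also the place where the tree assumption enters. The positivity of $\delta_G$ for small $\varepsilon$ is automatic from the expansion, so no restriction on the signs of $k$ or $q$ is needed.
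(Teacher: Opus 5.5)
Your proposal is correct and follows essentially the same route as the paper. Both arguments use the identity $|U|-|E_U|=$ (number of components of $G_U$) to get $\delta_G(\varepsilon k\,|\,q/\varepsilon)=1+\varepsilon\,\Delta_G^{\mathrm{Kum}}(k\,|\,q)+O(\varepsilon^2)$, and then conclude from the elementary limit $(1+\varepsilon t_\varepsilon)^{1/\varepsilon}\to e^{t}$. You verify that limit via logarithms and make explicit the positivity of $\delta_G$ for small $\varepsilon$, which the paper leaves implicit.
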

\begin{proof}
Since $G=(V,E)$ is a tree, for every $U \subseteq V$ one has $|U| - |E_U| = \pi(U)$, where $\pi(U)$ is the number of connected components of $G_U$. Thus, directly from the definition of $\delta_G$ we get
\begin{align*}
\delta_{G}(\varepsilon k\,|\,q/\varepsilon)=&\sum_{U \subseteq V} \varepsilon^{|U| - |E(U)|}q^{E(U)}k^{U}=\sum_{U \subseteq V} \varepsilon^{\pi(U)}q^{E(U)}k^{U}\\
=&1+\varepsilon\sum_{\substack{\varnothing\neq U \subseteq V\\ G_U\,\text{is a tree}}}\,q^{E(U)}k^{U}+\varepsilon^2\sum_{\substack{U \subseteq V\\ \pi(U)\ge 2}} \varepsilon^{\pi(U)-2}q^{E(U)}k^{U},
\end{align*}
which, in view of \eqref{DKum}, gives 
\begin{align}\label{dapp}
		\delta_{G}(\varepsilon k\,|\,q/\varepsilon)
		=
		1 + \varepsilon \Delta_{G}^{\mathrm{Kum}}(k\,|\,q)
		+ o(\varepsilon). 
	\end{align}
	
    Set
	\begin{align}
		t_{\varepsilon} \coloneqq \frac{\delta_{G}(\varepsilon k\,|\,q/\varepsilon) - 1}{\varepsilon}=\Delta_{G}^{\mathrm{Kum}}(k\,|\,q)+\tfrac{o(\varepsilon)}{\varepsilon},
	\end{align}
	so that $\delta_{G}(\varepsilon k\,|\,q/\varepsilon) = 1 + \varepsilon t_{\varepsilon}$ and $t_{\varepsilon} \to \Delta_{G}^{\mathrm{Kum}}(k\,|\,q)$ as $\varepsilon \downarrow 0$.
	The claim follows from the elementary limit $(1 + \varepsilon t_{\varepsilon})^{1 / \varepsilon} \to e^{t}$ whenever $t_{\varepsilon} \to t$.
\end{proof}

\bigskip
\begin{proof}[Proof of Theorem \ref{prop:tree-kummer-limit}]
The density of $\varepsilon^{-1}\,K^{(\varepsilon)}$, given in \eqref{denII}, takes the form
	\begin{align}
		\underbrace{c_{\mathrm{II}}(\varepsilon)\,\varepsilon^{S(a)}}_{J(\varepsilon)}\,\times\,\underbrace{\delta_{G}(\varepsilon k\,|\,q/\varepsilon)^{- \beta / \varepsilon}
		\Bigl(\prod_{v \in V}k_{v}^{a_{v} - 1}\Bigr) }_{M(\varepsilon,k)}\,\times \,\1{(0, \infty)^{V}}(k),
	\end{align}
    where $S(a)=\sum_{v\in V}\,a_v$. In view of Lemma \ref{lim1},
    \begin{equation}\label{Mk}
    \lim_{\varepsilon\downarrow 0}\,M(\varepsilon,k)=\exp \{- \beta \Delta_G^{\mathrm{Kum}}(k\,|\,q) \}\,\prod_{v \in V}k_{v}^{a_{v} - 1}.
    \end{equation}
    
Note that $\delta_{G}(\varepsilon k\,|\,q/\varepsilon)\ge 1+\varepsilon \sum_{v\in V}\,k_v$. Take $c>0$ (to be specified later). Since the function $\varepsilon\mapsto (1+\varepsilon t)^{1/\varepsilon}$, $\varepsilon\in(0,c)$, is decreasing, we have
    \[
    \delta_{G}(\varepsilon k\,|\,q/\varepsilon)^{-\beta/\varepsilon}\le (1+c\sum_{v\in V}k_v)^{-\beta/c}.
    \]
    which yields 
    \[
    M(\varepsilon,k)\le (1+c\sum_{v\in V}k_v)^{-\beta/c}\,\prod_{v \in V}k_{v}^{a_{v} - 1}.
    \]
    Taking $c$ such that $S(a)<\tfrac{\beta}c$ we conclude that the right-hand side above is integrable over $(0,\infty)^V$. Thus by the Lebesgue dominated convergence, in view of \eqref{Mk}, we have
    \begin{equation}\label{jeden}
    1=\lim_{\varepsilon\downarrow 0}\,J_{\varepsilon} \int_{(0,\infty)^V}\,M(\varepsilon,k)\,\mathrm dk=\left(\lim_{\varepsilon\downarrow 0}\,J_{\varepsilon}\right)\,\int_{(0,\infty)^V}\,\exp \{- \beta \Delta_G^{\mathrm{Kum}}(k\,|\,q) \}\,\prod_{v \in V}k_{v}^{a_{v} - 1}\,\mathrm dk.
    \end{equation}
    Consequently, the limit $\lim_{\varepsilon\downarrow 0}\,J_{\varepsilon}$ exists and, due to \eqref{jeden}, it is exactly the normalizing constant of the tree-Kummer density \eqref{tKum}. Thus the proof is complete.
    \end{proof}

From the above proof, one can relatively easily decipher the normalizing constant $c_K$ of the $\mathrm{TK}^G(a,\beta,q)$ distribution, which seems to be not available in the literature. 
\begin{proposition}\label{ck}
The normalising constant $c_K$ of the tree-Kummer distribution  $\mathrm{TK}^G(a, \beta, q)$, $a\in(0,\infty)^V$, $\beta>0$, $q\in(0,\infty)^V$ has the form
\begin{equation}\label{ckform}
c_K=\biggl(\prod_{v\in V}\,\tfrac{\beta^{a_v(1-d_v/2)}}{\Gamma(a_v)}\biggr)\,\prod_{\{w,v\}\in E}\,\sqrt{\tfrac{q_{w,v}^{a_u+a_v}}{U\left(a_w,a_w-a_v+1,\beta/q_{w,v}\right)\,U\left(a_v,a_v-a_w+1,\beta/q_{w,v}\right)}},
\end{equation}
where $d_v$ denotes the degree of vertex $v\in V$ and $U$ is the Tricomi function.
\end{proposition}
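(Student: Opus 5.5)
The constant $c_K$ is identified with $\lim_{\varepsilon\downarrow0}J(\varepsilon)$ from the proof of Theorem~\ref{prop:tree-kummer-limit}. There, \eqref{jeden} shows that this limit exists and equals the tree-Kummer normalizing constant. So it suffices to compute
\[
\lim_{\varepsilon\downarrow 0}\,\varepsilon^{S(a)}\,c_{\mathrm{II}}(a,\beta/\varepsilon,q/\varepsilon),
\]
using the explicit formula \eqref{cII}. Substituting $\beta\mapsto\beta/\varepsilon$ and $q\mapsto q/\varepsilon$ in \eqref{cII} splits the expression into a vertex part and an edge part, which I treat separately.

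\emph{Vertex part.} Each vertex contributes $\Gamma(\beta/\varepsilon)\big/\bigl(\Gamma(a_v)\Gamma(\beta/\varepsilon-a_v)\bigr)$. The standard asymptotics $\Gamma(x)/\Gamma(x-a)\sim x^{a}$ as $x\to\infty$ turn this into $(\beta/\varepsilon)^{a_v}/\Gamma(a_v)$. Multiplying over $v\in V$ produces $\varepsilon^{-S(a)}\prod_v\beta^{a_v}/\Gamma(a_v)$, and the factor $\varepsilon^{-S(a)}$ exactly cancels $\varepsilon^{S(a)}$.

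\emph{Edge part.} For an edge $\{u,w\}$ I need the asymptotics of ${}_2F_1(a_u,a_w;c;1-z)$ with $c=\beta/\varepsilon$ and $z=q_{u,w}/\varepsilon$. I will use the Euler integral
\[
{}_2F_1(a_u,a_w;c;1-z)=\tfrac{\Gamma(c)}{\Gamma(a_w)\Gamma(c-a_w)}\int_0^1 t^{a_w-1}(1-t)^{c-a_w-1}\bigl(1+(z-1)t\bigr)^{-a_u}\,dt
\]
and substitute $t=s/c$. As $\varepsilon\downarrow0$ one has $(1-s/c)^{c-a_w-1}\to e^{-s}$ and $1+(z-1)s/c\to 1+q_{u,w}s/\beta$. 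The prefactor behaves like $\Gamma(c)/\bigl(\Gamma(c-a_w)\,c^{a_w}\bigr)\to1$. Hence
\[
{}_2F_1\to\tfrac{1}{\Gamma(a_w)}\int_0^\infty s^{a_w-1}e^{-s}\bigl(1+q_{u,w}s/\beta\bigr)^{-a_u}\,ds=(\beta/q_{u,w})^{a_w}\,U\bigl(a_w,a_w-a_u+1,\beta/q_{u,w}\bigr).
\]
The last equality follows from the rescaling $s=(\beta/q_{u,w})x$ together with the integral representation $U(a,b,x)=\Gamma(a)^{-1}\int_0^\infty e^{-xt}t^{a-1}(1+t)^{b-a-1}\,dt$.

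\emph{Symmetrization and assembly.} Since ${}_2F_1$ is symmetric in its first two parameters, the same limit also equals $(\beta/q_{u,w})^{a_u}U(a_u,a_u-a_w+1,\beta/q_{u,w})$. I then write the limit as the geometric mean of these two expressions, which yields the symmetric square-root form in \eqref{ckform}. Collecting powers of $\beta$, each edge contributes $\beta^{-(a_u+a_w)/2}$, so each vertex $v$ loses $\beta^{a_vd_v/2}$ in total. Together with the vertex part this gives the factor $\beta^{a_v(1-d_v/2)}$, and the edge factors $q_{u,w}^{(a_u+a_w)/2}$ land in the numerator inside the square root, as in \eqref{ckform}.

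\emph{Main obstacle.} The delicate step is justifying the passage to the limit inside the Euler integral. The integrand after substitution, restricted to $s\in(0,c)$, is dominated as follows. Since $(1-s/c)^{c-a_w-1}\le e^{-s(c-a_w-1)/c}\le e^{-s/2}$ for small $\varepsilon$, the middle factor is bounded by $e^{-s/2}$. For the last factor I need $z-1\ge0$, i.e.\ $q_{u,w}/\varepsilon\ge1$, which holds for small $\varepsilon$ because $q_{u,w}>0$; then $\bigl(1+(z-1)s/c\bigr)^{-a_u}\le1$. Dominated convergence then applies with the integrable bound $s^{a_w-1}e^{-s/2}$.
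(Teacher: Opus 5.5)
Your proposal is correct and follows the paper's argument. You identify $c_K$ with $\lim_{\varepsilon\downarrow 0}J(\varepsilon)$, substitute \eqref{cII}, and apply the Gamma-ratio asymptotics and the confluence limit ${}_2F_1\bigl(a_u,a_w;\beta/\varepsilon;1-q_{u,w}/\varepsilon\bigr)\to(\beta/q_{u,w})^{a_u}U\bigl(a_u,a_u-a_w+1,\beta/q_{u,w}\bigr)$, then collect powers of $\beta$.

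The only differences are in the sub-steps. You prove the confluence limit directly from Euler's integral by dominated convergence, whereas the paper quotes it from \cite{EMOT1953}. You get the symmetric square-root form as the geometric mean of the two equal expressions coming from the symmetry of ${}_2F_1$ in its first two parameters, whereas the paper uses the Kummer transformation $U(A,B,z)=z^{1-B}U(A-B+1,2-B,z)$.
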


The proof below is based on limit connection between  ${}_2F_1$  and the Tricomi function. Another possibility would be to rely directly on IP property of the tree-Kummer law, see \cite{PilWes16}[Theorem 2].

Recall that the Tricomi's \(U\)-function has the integral representation (see, e.g. 13.4.4 in \cite{NIST2025})
\[
U(\alpha,\beta,z)=\tfrac{1}{\Gamma(\alpha)}\,\int_0^\infty\,e^{-zt}t^{\alpha-1}(1+t)^{\beta-\alpha-1}\,\mathrm dt.
\]

\begin{proof}[Proof of Proposition \ref{ck}]
Recall that $c_K=\lim_{\varepsilon \downarrow 0}\,J(\varepsilon)$ with  $J(\varepsilon):=c_{\mathrm{II}}(\varepsilon)\,\varepsilon^{S(a)}$.  From  \eqref{cII} it follows that
    \begin{equation}\label{jeps}
J(\varepsilon)=\Biggl(\prod_{v\in V}\,\bigl[\tfrac1{\Gamma(a_v)}\,\tfrac{\varepsilon^{a_v}\,\Gamma(\beta/\varepsilon)}{\Gamma(\beta/\varepsilon-a_v)}\bigr]\Biggr)\,
    \,\prod_{\{w,u\}\in E}\,\tfrac 1{{}_{2}F_{1}\bigl(a_{w}, a_{u}, \beta/\varepsilon; 1 - q_{w, u}/\varepsilon\bigr)}
    \end{equation}
     We recall (see e.g. formula 5.11.12 in \cite{NIST2025})
    \begin{equation*}\label{NIST1}
    \tfrac{\Gamma(z+A)}{\Gamma(z+B)}\sim z^{A-B},\quad \text{as }\;z\to\infty,\quad A,B\in \mathbb{R}.
    \end{equation*}
    which for $z=\tfrac{\beta}{\varepsilon}$, $A=0$ and $B=-a_v$ gives
    \begin{equation}\label{Gamma}
    \lim_{\varepsilon\downarrow 0}\tfrac{\varepsilon^{a_v}\,\Gamma(\beta/\varepsilon)}{\Gamma(\beta/\varepsilon-a_v)}=\beta^{a_v},\quad v\in V.
    \end{equation}
 We also recall (see  \cite{EMOT1953}[Sec. 6.8(1)] that 
 \[
 \lim_{\gamma \to \infty}\,{}_2F_1\!\left(\alpha,\beta;\gamma;1-\tfrac{\gamma}{z}\right)=z^\alpha U\!\left(\alpha,\alpha-\beta+1,z\right),
 \]
 which for $a_u=\alpha$, $a_v=\beta$, $\gamma=\beta/\varepsilon$, $z=\beta/q_{w,u}$, on noting that ${}_2F_1$ function is symmetric in the first two parameters, gives
 \begin{equation}\label{Trico}
 \lim_{\varepsilon\downarrow 0}\,{}_2F_1\!\left(a_u,a_w;,\tfrac{\beta}\varepsilon;1-\tfrac{q_{w,u}}{\varepsilon}\right)=\left(\tfrac{\beta}{q_{w,u}}\right)^{a_u}\, U\!\left(a_u,a_u-a_w+1,\beta/q_{u,w}\right).
 \end{equation}
Thus, using \eqref{Gamma} and \eqref{Trico} in \eqref{jeps}, we get  
\begin{align}\label{cck}
c_K=\lim_{\varepsilon \downarrow 0}\,c_{\mathrm{II}}(\varepsilon)\,\varepsilon^{S(a)}=\Biggl(\prod_{v\in V}\,\tfrac{\beta^{a_v}}{\Gamma(a_v)}\Biggr)\,\prod_{\{w,u\}\in E}\,\left(\tfrac{q_{w,u}}{\beta}\right)^{a_u}\,\tfrac1{U(a_u,a_u-a_w+1,\beta/q_{w,u})}.
\end{align}
The second product above does not look symmetric in $u$ and $v$, but it actually, is: 13.2.40 of \cite{NIST2025} says that
\[
U(A,B,z)=z^{1-B}U(A-B+1,2-B,z).
\]
For $A=a_u$, $B=a_u-a_w+1$ and $z=\beta/q_{w,u}$ it gives 
\[
\left(\tfrac\beta{q_{w,u}}\right)^{a_w}\,U\Bigl(a_w,a_w-a_u +1,\tfrac\beta{q_{w,u}}\Bigr)=\left(\tfrac\beta{q_{w,u}}\right)^{a_u}\,U\left(a_u,a_u-a_w +1,\tfrac\beta{q_{w,u}}\right).
\] 
This identity applied to \eqref{cck}, on noting that $\sum_{\{u,w\}\in E}\,(a_u+a_w)=\sum_{v\in V}\,d_va_v$ (recall that $d_v$ is the degree of the vertex $v\in V$), gives \eqref{ckform}.
\end{proof}

\bibliographystyle{amsplain}
\bibliography{tree_beta_references}
\end{document}